\numberwithin{equation}{section}
\theoremstyle{plain}
\newtheorem{theorem}{Theorem}[section]
\newtheorem{corollary}[theorem]{Corollary}
\newtheorem{proposition}[theorem]{Proposition}
 \theoremstyle{definition}
\newtheorem{definition}[theorem]{Definition}
\newtheorem{remark}[theorem]{Remark}
\newtheorem{example}[theorem]{Example}
\DeclarePairedDelimiterX{\inp}[2]{\langle}{\rangle}{#1, #2}
\newcommand{\cE}{{\mathcal E}}
\newcommand{\cM}{{\mathcal M}}
\newcommand{\tr}{{\mathrm{Tr}}}
\newcommand{\ba}{\begin{eqnarray}}
\newcommand{\na}{\end{eqnarray}}
\newcommand{\ban}{\begin{eqnarray*}}
\newcommand{\nan}{\end{eqnarray*}}
\newcommand{\N}{{\mathbb N}}
\newcommand{\R}{{\mathbb R}}
\renewcommand{\thefootnote}{\fnsymbol{footnote}}
\g@addto@macro{\endabstract}{\@setabstract}
\newcommand{\authorfootnotes}{\renewcommand\thefootnote{\@fnsymbol\c@footnote}}%
\title[]{closed walks of low dimension and twisted moments on self-loop graphs}
\subjclass[2020]{05C50, 05C90, 05C92}
\keywords{Closed walks, Self-loop graphs, Twisted moments, Graph energy.}
\begin{document}

\begin{center}
    \vspace{-1cm}
	\maketitle
	
	\normalsize
    \authorfootnotes
    Johnny Lim
	\par \bigskip



        \small{School of Mathematical Sciences, Universiti Sains Malaysia, Penang, Malaysia}\par \bigskip

\end{center}

\begin{abstract}
Let $G_S$ be a graph with loops attached at each vertex in $S \subseteq V(G).$ In this article, we develop exact formulae for the number of closed $3$- and $4$-walks on $G_S$ in terms of vertex degrees and certain elementary subgraphs of $G_S.$ We then derive the specific closed walks formulae for several graph families such as complete bipartite self-loop graphs, complete graphs, cycle graphs, etc. We demonstrate that such invariants are non-trivial in $G_S,$ which otherwise may be trivial in the loopless case. Moreover, we study a moment-like quantity
$\cM_q(G_S)=\sum^n_{i=1} |\lambda_i(G_S) - \frac{\sigma}{n}|^q,$ twisted by the spectral moment $\mathsf{M}_1(G_S)$ for $G_S,$ and show a positivity result. We also establish that the following ratio inequality holds:
\[
\frac{\cM_{1}}{\cM_{0}} \leq \frac{\cM_{2}}{\cM_{1}} \leq 
\frac{\cM_{3}}{\cM_{2}} \leq \frac{\cM_{4}}{\cM_{3}} \leq \cdots \leq \frac{\cM_{n}}{\cM_{n-1}} \leq \cdots.  
\] As a consequence, we obtain lower bounds for the self-loop graph energy $\cE(G_S)$ in terms of $\cM_i,$ extending some classical bounds.  
\end{abstract}

\section{Introduction}
\label{sect1}
Let $G$ be a simple graph of order $n=|V(G)|$ and size $m=|E(G)|,$ where $V(G)$ and $E(G)$ are the vertex and edge sets of $G,$ respectively. The degree of $v$ in $G$ is denoted by $d_G(v).$  
Let $K_n$ be the complete graph or order $n$; $K_{a,b}$ be the complete bipartite graph of part sizes $a$ and $b$; $P_n$ and $C_n$ be the path graph and cycle graph of order $n,$ respectively.  
By attaching one loop at each vertex in $S \subseteq V(G),$ we obtain a self-loop graph $G_S$ with $|S|=\sigma,$ and $0\leq \sigma \leq n.$ When $\sigma=0,$ $G_S=G;$ when $\sigma=n,$ we write $G_S=\widehat{G}.$
Recall that a multi-digraph is a pair $(\mathscr{V},\mathscr{E}),$ where $\mathscr{V}$ is a finite set of vertices and $\mathscr{E}$ is a set of ordered pairs of elements of $\mathscr{V}$, for which multiple edges and self-loops are allowed, cf. \cite{cvetkovic1995spectra}. Thus, we can regard a self-loop graph $G_S$ as a digraph without multiple edges, with any undirected edge being a pair of arcs connecting the same vertices but having opposite directions and conversely; we adopt the convention that any directed loop at a vertex corresponds to an undirected loop at the same vertex and vice versa. A walk $w_k$ of length $k$ in $G_S$ is a sequence of (not necessarily distinct) vertices $v_0,v_1,\ldots, v_k$ such that there is an edge from $v_{i-1}$ to $v_i$ for each $i=1,2,...,k.$ If $v_k=v_0,$ then the walk is said to be closed. We denote $w_k^{cl}(G_S)$ as the total number of closed walks of length $k$ in $G_S.$

Let $A(G_S)$ be the adjacency matrix of $G_S,$ i.e., $a_{ij}=1$ if $v_i$ is adjacent to $v_j$ ($i\neq j$), $a_{ii}=1$ for $v_i \in S,$ and $a_{ij}=0$ otherwise. The eigenvalues of $G_S$ are the eigenvalues of $A(G_S).$ Denote by $\lambda_1(G_S) \geq \lambda_2(G_S) \geq \cdots \geq \lambda_n(G_S)$ the eigenvalues of $G_S.$ In \cite{gutman2021energy}, the summation of these eigenvalues $\lambda_i(G_S)$ and their squares $\lambda_i^2(G_S)$ are obtained:
$\sum^n_{i=1} \lambda_i(G_S) = \sigma$ and $\sum^n_{i=1} \lambda_i^2(G_S) = 2m+\sigma.$
These formulae coincide\footnote[2]{We remark that in general cases (e.g. when the matrix is no longer the adjacency-type matrix) this interpretation of ``taking traces = counting closed walks'' is no longer true, see for instance \cite{SomborQuanti2023} for the discussion on the Sombor matrices of $(K_n)_S$.} with the number of closed 1- and 2-walks on $G_S,$ respectively, cf. Sect. \ref{sec2.1} below. 

On the other hand, closed $3$- and $4$-walks of $G_S$ remain an open topic. It was first realised in \cite{akbari2023selfloop} that closed 3-walks are necessary in determining the spectrum of complete bipartite graphs with self-loops. This motivates the first theme: \textit{to determine the number of closed 3-walks $w_3^{cl}$ and closed 4-walks $w_4^{cl}$ for any self-loop graph via combinatorial approach}. In Theorem \ref{closed3} and \ref{closed4GS}, we express $w_3^{cl}(G_S)$ and $w_4^{cl}(G_S),$ respectively, in terms of vertex degrees and some elementary subgraphs such as $K_2, K_3, K_4$ and $C_4,$ which provides a general computational method without resorting to specific adjacency matrices when $n$ is large. These two formulae extend the classical invariants $w_3^{cl}(G)$ (cf. \cite[Result 2h]{biggs1993algebraic}) and $w_4^{cl}(G)$ (cf. \cite{GutmanDas2004} and references therein) to self-loop graphs. We demonstrate its applications in Example 2.5-2.11 and Example 2.16-2.20.

The second part of this article is devoted to discussing a quantity generalized from spectral moments $\mathsf{M}_k= \sum^n_{i=1} \lambda^k_i$. Note that $\mathsf{M}_k(G)$ may be vanishing, e.g., when $G$ is a connected tree and $k$ is any odd positive integer. However, $\mathsf{M}_k(G_S)$ is \textit{a priori} at least $\sigma \geq 1$ for non-empty $S$ and $k\geq 1.$ 
Thus, we introduce a generalized moment-like quantity $\cM^k_q$, called \textit{twisted moments}, and investigate the case $k=1$ extensively in Sect. \ref{sect3}. This extends the results in \cite{Zhou2007} and simultaneously provides a way to obtain some bounds for the energy of $G_S,$ first introduced by Gutman \textit{et al.} \cite{gutman2021energy}: 
\begin{equation}
\label{eq:energy}
 \cE(G_S) = \sum^n_{i=1} \left| \lambda_i(G_S) - \frac{\sigma}{n}\right|.  
\end{equation}
The research on $G_S$ and its energy $\cE(G_S)$ is very current, see \cite{akbari2023selfloop,akbari2024line, jovanovic2023,rakshith2024,ShettyBhat2023} for new developments.  
The energy $\cE(G_S)$ can be viewed as a special case of a twisted moment when $k=q=1.$ 
In Sect. \ref{sect3}, we establish some basic results about $\cM_q (=\cM^1_q),$ including its determination for $q=0,1,2,3,4,$ and a positivity result for all $q.$ Then, we prove a ratio inequality of $\cM_q$ for $q \in \N \cup \{0\},$ which is used to derive bounds relating the quantities. 

\vspace{-0.2em}
\section{Closed $k$-walks on self-loop graphs for small values of $k$}
\label{sect2}
In the following subsections, we derive the formulae of closed $k$-walks for self-loop graphs for $k=2,3,$ and $4.$   Before that, recall the following theorem.
\begin{theorem}
\label{digraphwalk}
\cite{cvetkovic1995spectra}
Let $A$ be the adjacency matrix of a multi-digraph $G$ with vertices $1, 2, \ldots, n.$ Let $A^k=(a^{(k)}_{ij}).$ Moreover, let $w_k(i,j)$ denote the number of walks of length $k$ starting at the vertex $i$ and terminating at the vertex $j$.  Then, $w_{k}(i,j)=a^{(k)}_{ij}$ for $k=0,1,2,\ldots.$       
\end{theorem}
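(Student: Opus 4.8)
The plan is to prove the identity $w_k(i,j) = a^{(k)}_{ij}$ by induction on $k$, exploiting the combinatorial meaning of matrix multiplication. First I would dispose of the base cases. For $k=0$ one has $A^0 = I$, so $a^{(0)}_{ii}=1$ and $a^{(0)}_{ij}=0$ for $i\neq j$; this matches the fact that the only walk of length $0$ starting at $i$ is the trivial one-vertex walk at $i$, and there is no walk of length $0$ joining two distinct vertices. For $k=1$, $A^1 = A$ and $a_{ij}$ is by definition the number of arcs from $i$ to $j$ (counted with multiplicity, in the multi-digraph setting), which is exactly $w_1(i,j)$.

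For the inductive step, suppose $w_k(i,j) = a^{(k)}_{ij}$ for some $k\geq 1$ and all $i,j$. Every walk of length $k+1$ from $i$ to $j$ decomposes uniquely as a walk of length $k$ from $i$ to some intermediate vertex $\ell$, followed by a single arc from $\ell$ to $j$; conversely, any such concatenation is a walk of length $k+1$ from $i$ to $j$. Since the choice of the initial length-$k$ walk and the choice of the terminal arc are independent, summing over $\ell$ gives $w_{k+1}(i,j) = \sum_{\ell=1}^{n} w_k(i,\ell)\,a_{\ell j} = \sum_{\ell=1}^{n} a^{(k)}_{i\ell}\,a_{\ell j} = (A^k A)_{ij} = a^{(k+1)}_{ij}$, where the middle equality uses the induction hypothesis together with the definition of $a_{\ell j}$ as the arc-multiplicity from $\ell$ to $j$. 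This completes the induction.

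The only subtlety worth flagging is the bookkeeping of parallel arcs and loops in a general multi-digraph: to make the step $w_{k+1}(i,j)=\sum_\ell w_k(i,\ell)\,a_{\ell j}$ correct one should regard a walk as a sequence of vertices together with a choice of arc traversed at each step, so that $a_{\ell j}$ parallel arcs from $\ell$ to $j$ genuinely yield $a_{\ell j}$ distinct extensions of a given walk, and a loop at a vertex contributes to the corresponding diagonal entry in the expected way. In the present article the matrix $A(G_S)$ is $0/1$-valued, so no multiplicities occur and this point is automatic; nonetheless the argument above is valid verbatim for arbitrary nonnegative integer adjacency matrices. I do not anticipate any real obstacle here — this is the standard ``powers of the adjacency matrix enumerate walks'' fact — so essentially all the effort lies in phrasing the induction cleanly.
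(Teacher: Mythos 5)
Your induction argument is correct and complete; the base cases, the unique decomposition of a length-$(k+1)$ walk into a length-$k$ walk followed by a final arc, and the resulting identity $w_{k+1}(i,j)=\sum_{\ell} w_k(i,\ell)\,a_{\ell j}=(A^{k}A)_{ij}$ are exactly the standard textbook proof, which is what the paper relies on by citing \cite{cvetkovic1995spectra} rather than reproving the result. Your remark on counting walks as vertex sequences together with chosen arcs (so that parallel arcs and loops are counted with the correct multiplicity) is the right way to make the multi-digraph case airtight, and it is consistent with how the paper applies the theorem to the $0/1$ adjacency matrix $A(G_S)$.
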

Here, the $0$-walk between $v_i$ and $v_j$ is $w_0(i,j)=\delta_{ij},$ where $\delta_{ij}=1$ if $i=j$ and $\delta_{ij}=0$ if $i \neq j.$ As explained in Sect. \ref{sect1}, we may consider a self-loop (undirected) graph as a digraph with self-loops and without multiple edges. Then, by Theorem~\ref{digraphwalk}, Schur's Triangularization Theorem \cite{HornJohnson2013}, and the conjugation invariance property of matrix trace, when $i=j,$  we have 
\[
\sum^n_{i=1} \lambda^k_i(G_S)
= \mathrm{Tr}(A^k(G_S)) 
= \sum^n_{i=1} a^{(k)}_{ii} =w^{cl}_k(G_S).
\]
\subsection{Closed 2-walks on $G_S$} 
\label{sec2.1}
It is clear that the number of closed $1$-walks for any self-loop graph $G_S$ is $\sigma,$ comprising only of a single self-looping, one for each loop.  Let us now study closed $2$-walks on $G_S$.

\begin{proposition}
Let $G_S$ be a connected self-loop graph of order $n\geq 2$ and size $m.$ Let $w_2^{cl}(G_S)$ be the total number of closed 2-walks on $G_S.$ Then,
\begin{equation}
\label{eq:closed2}
w_2^{cl}(G_S)= 2m + \sigma.
\end{equation}
\end{proposition}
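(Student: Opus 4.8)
The plan is to invoke the trace identity established just above, namely $w_2^{cl}(G_S)=\mathrm{Tr}(A^2(G_S))$ (a special case of Theorem~\ref{digraphwalk} combined with Schur triangularization), and then to evaluate the right-hand side by exploiting the symmetry of $A(G_S)$. Writing $A=A(G_S)=(a_{ij})$, which is a real symmetric $0$--$1$ matrix with $a_{ii}=1$ precisely when $v_i\in S$ and $a_{ij}=1$ ($i\neq j$) precisely when $v_iv_j\in E(G)$, one has $\mathrm{Tr}(A^2)=\sum_{i=1}^n\sum_{j=1}^n a_{ij}a_{ji}=\sum_{i,j}a_{ij}^2$.

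Next I would split this double sum according to whether $i=j$. The diagonal part $\sum_{i}a_{ii}^2=\sum_{i}a_{ii}$ contributes $1$ for each looped vertex, hence $\sigma$ in total. The off-diagonal part $\sum_{i\neq j}a_{ij}^2=\sum_{i\neq j}a_{ij}$ contributes $1$ for each \emph{ordered} pair of distinct adjacent vertices; since every edge $v_iv_j\in E(G)$ is recorded twice, once as $(i,j)$ and once as $(j,i)$, this part equals $2m$. Adding the two contributions gives $w_2^{cl}(G_S)=2m+\sigma$, which is \eqref{eq:closed2}. (The hypotheses that $G_S$ is connected and $n\ge 2$ play no role in this argument.)

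Equivalently, one can count the closed $2$-walks $v_0,v_1,v_2$ with $v_2=v_0$ directly: either $v_1\neq v_0$, which forces edges between $v_0$ and $v_1$ in both directions and yields one walk per ordered adjacent pair, hence $\sum_{v\in V(G)}d_G(v)=2m$ by the handshake lemma; or $v_1=v_0$, which forces a loop at $v_0$ and yields the single walk $v_0\to v_0\to v_0$ for each vertex of $S$, hence $\sigma$. The only point that needs care — and it is not a genuine obstacle — is the loop normalization: in our convention a single undirected loop gives the single diagonal entry $a_{ii}=1$ (not $2$), so it is counted once, in contrast to an edge which is traversed in both orientations. The outcome is consistent with the spectral identity $\sum_{i=1}^n\lambda_i^2(G_S)=2m+\sigma$ recalled in Section~\ref{sect1}.
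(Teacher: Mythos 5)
Your proposal is correct, and in fact it contains two valid arguments. Your primary route — computing $w_2^{cl}(G_S)=\mathrm{Tr}(A^2(G_S))=\sum_{i,j}a_{ij}^2=\sigma+2m$ via the symmetry of $A(G_S)$ and Theorem~\ref{digraphwalk} — is genuinely different from the paper's proof, which is a purely combinatorial count of closed $2$-walks vertex by vertex: each $v\in V(G)$ contributes $d_G(v)$ walks through incident edges, plus one double self-looping walk for each $v\in S$, giving $\sum_{v\in S}(d_G(v)+1)+\sum_{v\notin S}d_G(v)=2m+\sigma$. Your second, ``equivalently'' paragraph is essentially the paper's argument. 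The trace computation is shorter and makes the loop normalization ($a_{ii}=1$, counted once) completely transparent, but it amounts to re-deriving the spectral identity $\sum_i\lambda_i^2(G_S)=2m+\sigma$ that the paper cites from the literature and then compares against \eqref{eq:closed2}; the paper's direct walk count is the approach that scales to the later results on closed $3$- and $4$-walks, where no such short matrix shortcut is available. Your observation that connectivity and $n\ge 2$ are not needed is also accurate.
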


\begin{proof}
Let $v_i,v_j \in V(G)$ such that $v_i\in S, v_j \in V(G) \backslash S.$ 
There are two possible closed $2$-walks at $v_i$: 
\begin{enumerate}[(i)]
\item Double self-looping $v_i \to v_i \to v_i,$ one walk for each loop.
\item Via an edge $v_i \to v_j \to v_i,$ two walks for each edge incident with $v_i.$  
\end{enumerate}
On the other hand, a closed 2-walk at $v_j$ only occurs via an edge. 
Thus, the total number of closed $2$-walks on $G_S$ is
\begin{align*}
w_2^{cl}(G_S)
&=\sum_{v_i \in S} (d_G(v_i)+1) + \sum_{v_i \notin S} d_G(v_i)  \\
&= \sum_{v\in V(G)} d_G(v) + \sum_{v_i\in S} 1 \\
&= 2m + \sigma. \qedhere
\end{align*}
\end{proof}

Equation~\ref{eq:closed2} coincides with  
$\sum^n_{i=1} \lambda^2_i(G_S)$ as derived in \cite[Lemma 4]{gutman2021energy}.
A similar method to the previous proof will be adopted to develop the number of closed 3-walks and 4-walks, respectively.
\subsection{Closed 3-walks on $G_S$} 

For $S \subseteq V(G),$ define
\begin{align}
n_1(v_i) &=\left|\{v_j \in V(G)\mid v_iv_j \in E(G) , v_i \in S, v_j \notin S\}\right|, \label{eq:n1a} \\
n_2(v_i) &=\left|\{v_j \in V(G)\mid v_iv_j \in E(G) , v_i,v_j \in S\}\right|, \label{eq:n2a} \\
n_\triangle(v_i) &= \left| \{\triangle(v_i,v_j,v_k) \mid v_i,v_j,v_k \in V(G)\} \right|, \label{eq:ntri}\\
n_{\triangle_1}(v_i) &= \left| \{\triangle(v_i,v_j,v_k) \mid v_i \in S, v_j,v_k \notin S\} \right| \label{eq:ntri1}\\
n_{\triangle_2}(v_i) &= \left| \{\triangle(v_i,v_j,v_k) \mid v_i, v_j \in S, v_k \notin S\} \right| \label{eq:ntri2} \\
n_{\triangle_3}(v_i) &= \left| \{\triangle(v_i,v_j,v_k) \mid v_i, v_j,  v_k \in S\} \right| \label{eq:ntri3} \\
n_{\square}(v_i) &= \left| \{\square(v_i,v_j,v_k,v_l) \mid v_i, v_j, v_k, v_l \in V(G) \wedge \notin V(K_4)\} \right| \label{eq:nC4}, \\
n_{\boxtimes}(v_i) &= \left| \{\boxtimes(v_i,v_j,v_k,v_l) \mid v_i, v_j, v_k, v_l \in V(G)\} \right| \label{eq:nK4}.
\end{align}
The quantity \eqref{eq:n1a} (resp. \eqref{eq:n2a}) corresponds to the number of edges incident with $v_i \in S$ and $v_j \notin S$ (resp. $v_j \in S$). Thus, 
\begin{equation}
\label{eq:n1n2}
n_1(v_i) + n_2(v_i) = d_G(v_i).
\end{equation} 
The quantity $n_\triangle(v_i)$ refers to the number of \textit{distinct} triangles $K_3$ \textit{at} $v_i,$ i.e., for which one of the vertices of the triangle is $v_i,$ whereas $n_{\triangle_r}(v_i), r=1,2,3,$ refer to the number of distinct triangles at $v_i$ such that each triangle has $r$ loops with $v_i$ having a loop. The last two quantities  $n_\square(v_i)$ and $n_\boxtimes(v_i)$ refer to the number of $C_4$ (not part of $K_4$) and $K_4$ at $v_i,$ respectively. For clarity, we remark that in a $K_4,$ we do not double count the ``boundary'' $C_4.$ In this case, if $v$ is any of its vertices, we write $n_{\boxtimes}(v)=1$ and $n_{\square}(v)=0.$

For notational brevity, in the following proof, we shall write  $ijkl$ to denote the walk $v_i \to v_j \to v_k \to v_l.$ 
\begin{theorem}
\label{closed3}
Let $G_S$ be a connected self-loop graph of order $n\geq 2$ and $|S|=\sigma.$ Let $w^{cl}_3(G_S)$ be the total number of closed 3-walks on $G_S.$ Then,
\begin{equation}
\label{eq:closed3}
w^{cl}_3(G_S)= 3 \sum_{v_i \in S} d_G(v_i) + 6 n_\triangle(G) + \sigma, 
\end{equation}
where $n_\triangle(G)$ is the number of triangles in $G.$ 
\end{theorem}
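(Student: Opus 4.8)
The plan is to imitate the proof of \eqref{eq:closed2}: I would fix a vertex $v_i$, partition the closed $3$-walks based at $v_i$ according to which of the two intermediate vertices coincide with $v_i$ or with each other, and which of the vertices must carry a loop; count each class; and then sum over $v_i$. By Theorem~\ref{digraphwalk} together with the trace identity recorded at the start of Section~\ref{sect2}, this sum equals $w_3^{cl}(G_S)=\mathrm{Tr}\bigl(A^3(G_S)\bigr)$, so the enumeration is all that is needed.

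Concretely, writing $ijk\,i$ for the walk $v_i\to v_j\to v_k\to v_i$, I expect the closed $3$-walks based at $v_i$ to split into exactly five shapes: (1) $iiii$, the triple self-loop, present iff $v_i\in S$, contributing $1$; (2) $iij\,i$ with $j\neq i$, which needs a loop at $v_i$ and the edge $v_iv_j$, hence $d_G(v_i)$ walks when $v_i\in S$ and none otherwise; (3) $ijii$ with $j\neq i$, the mirror image of (2), again $d_G(v_i)$ walks when $v_i\in S$; (4) $ijj\,i$ with $j\neq i$, which needs the edge $v_iv_j$ and a loop at $v_j$, giving one walk for each neighbour $v_j\in S$ of $v_i$; and (5) $ijk\,i$ with $v_i,v_j,v_k$ pairwise distinct, which forces $v_iv_j,v_jv_k,v_kv_i\in E(G)$, i.e.\ a triangle of $G$ through $v_i$ traversed in one of two directions, hence $2\,n_\triangle(v_i)$ walks. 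The first thing I would verify carefully is that (1)--(5) are mutually exclusive and exhaustive --- that every closed $3$-walk based at $v_i$ has exactly one of these shapes, and that any further coincidence among $v_i,v_j,v_k$ yields no walk at all.

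Then I would sum over $v_i\in V(G)$: shape (1) gives $\sum_{v_i\in S}1=\sigma$; shapes (2) and (3) give $\sum_{v_i\in S}d_G(v_i)$ each; shape (4) gives $\sum_{v_i\in V(G)}\bigl|\{v_j\in S:\,v_iv_j\in E(G)\}\bigr|$, which equals $\sum_{v_j\in S}d_G(v_j)$ after interchanging the order of summation; and shape (5) gives $\sum_{v_i\in V(G)}2\,n_\triangle(v_i)=6\,n_\triangle(G)$, since each triangle of $G$ is counted once at each of its three vertices. Adding these five contributions yields $3\sum_{v_i\in S}d_G(v_i)+6\,n_\triangle(G)+\sigma$, which is \eqref{eq:closed3}. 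The hypotheses $n\geq 2$ and connectedness play no role in the count.

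The main obstacle I anticipate is purely bookkeeping: making the five-way partition airtight (no closed $3$-walk omitted, none double-counted) and being scrupulous about which vertex's loop is used in shapes (2)--(4) --- a loop at $v_i$ in (2) and (3), but a loop at $v_j$ in (4). Once the partition is nailed down, the remaining steps are the two handshake-type summation identities and elementary arithmetic.
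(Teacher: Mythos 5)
Your proof is correct and takes essentially the same route as the paper: a direct enumeration of the closed $3$-walks based at each vertex, split according to which of the three vertices coincide and which must carry loops, followed by summation over the base vertex. The only cosmetic difference is that you organize the count by walk shape and finish with an interchange of summation, whereas the paper splits first on whether the base vertex lies in $S$ and simplifies via $n_1(v_i)+n_2(v_i)=d_G(v_i)$ and $\sum_{v_i\in S}n_1(v_i)=\sum_{v_j\notin S}n_1(v_j)$; the case counts agree throughout.
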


\begin{proof}
By definition, closed 3-walks on $G_S$ must traverse through either $(K_2)_S$ or $K_3$. 

\underline{Case 1}: Let $v_i \in S.$ There are four possibilities: 
\begin{enumerate}
    \item triple self-looping over $v_i:$ one walk ($iiii$),
    \item $(K_2)_S$ with vertices $v_i$ and $v_j \notin S:$ two walks each ($iiji$ and $ijii$), with a total of \textbf{$2n_1(v_i)$} walks;
    \item $(K_2)_S$ with vertices $v_i$ and $v_j \in S:$ three walks each ($iiji$, $ijii,$ and $ijji),$ with a total of \textbf{$3n_2(v_i)$} walks;
    \item $K_3$ with vertices $v_i,v_j,v_k:$ two walks each $(ijki, ikji)$ with a total of $2n_\triangle(v_i)$ walks. 
\end{enumerate}

\underline{Case 2}: Let $v_j \notin S.$ There are two possibilities:
\begin{enumerate}
    \item $(K_2)_S$ with vertices $v_j$ and $v_i \in S:$ one walk each ($jiij$) with a total of \textbf{$n_1(v_j)$} walks;
    \item $K_3$ with vertices $v_i,v_j,v_k:$ two walks each $(jikj, jkij)$ with a total of $2n_\triangle(v_j)$ walks. 
\end{enumerate}

Observe that each closed 3-walk on $(K_2)_S$ starting from $v_i \in S$ and with $v_j \notin S,$ corresponds to a closed 3-walk on $(K_2)_S$ starting from $v_j \notin S$ and with $v_i \in S,$ i.e.,
\begin{equation}
\label{eq:n1sum}
\sum_{v_i \in S} n_1(v_i) = \sum_{v_j \notin S} n_1(v_j).
\end{equation}
Therefore, the total number of closed 3-walks on $G_S$ is 
\begin{align*}
w_3^{cl}(G_S)
&= \sum_{v_i \in S} \left(2 n_1(v_i) + 3n_2(v_i) + 2n_\triangle(v_i) +1 \right) + 
\sum_{v_i \notin S} \left( n_1(v_i) + 2n_\triangle(v_i) \right) \\
&= \sum_{v_i \in S} \left(2 n_1(v_i) + 3n_2(v_i)\right) + \sum_{v_i \notin S} n_1(v_i) + \sum_{v \in V(G)} 2n_\triangle(v) + \sigma \\
&=\sum_{v_i \in S} \left(3 d_G(v_i) - n_1(v_i) \right) + \sum_{v_i \notin S} n_1(v_i) + 6 n_\triangle(G) + \sigma \\
&= 3\sum_{v_i \in S} d_G(v_i) + 6 n_\triangle(G) + \sigma. 
\end{align*}
where the third and fourth equalities follow from \eqref{eq:n1n2} and \eqref{eq:n1sum} respectively.
\end{proof}

\begin{remark}
\begin{enumerate}[(i)]
\item It is immediately to see that when $S=\emptyset$ (i.e., $\sigma=0$), Theorem~\ref{closed3} recovers the classical result $w^{cl}_3(G)=6n_\triangle(G).$
\item 
The third spectral moment of $G_S$ is thus given by
\[
\mathsf{M}_3(G_S)=\sum^n_{i=1} \lambda^3_i(G_S) = 3\sum_{v_i \in S} d_G(v_i) + 6 n_\triangle(G) + \sigma.
\]
\end{enumerate}
\end{remark}

\begin{example}
Let $G=K_n,$ $n \geq 3.$ Since
$
n_\triangle(K_n)= \begin{pmatrix}n \\ 3 \end{pmatrix} =  \dfrac{n!}{3!(n-3)!},
$
for any $S \subseteq V(G)$ with $|S|=\sigma,$  we obtain
\begin{align*}
w^{cl}_3((K_n)_S) 
&= 3 \sigma(n-1) + 6 \left(\frac{n!}{3!(n-3)!} \right) + \sigma\\
&=  \sigma(3n-2) + n(n-1)(n-2).
\end{align*}
Note that $w_3^{cl}((K_n)_S)$ is independent of the location of loops.
\end{example}

\begin{example}
Let $G=K_{a,b}$ be the complete graph of parts $(A,B)$ with size $a=|A|,b=|B| \geq 1.$ For $S = S_A \cup S_B \subseteq V(G)$ with $|S|=\sigma=\sigma_A + \sigma_B,$  since $n_{\triangle}(K_{a,b})=0,$ we deduce that 
\begin{align}
w^{cl}_3((K_{a,b})_S) 
&= 3 \left(\sum_{v_i \in S_A} d_G(v_i) + \sum_{v_i \in S_B} d_G(v_i) \right) + \sigma \nonumber\\
&= 3(b\sigma_A + a\sigma_B)+ \sigma. \label{eq:kmnS}
\end{align}
This is exactly the formula 
derived in \cite[Lemma 2.3]{akbari2023selfloop}, which has been applied to find the eigenvalues of complete bipartite self-loop graphs $(K_{a,b})_S$ when $0<\sigma<a$ and $a<\sigma<a+b$ \cite[Theorem 2.4]{akbari2023selfloop}, where the eigenvalues are exactly the root of some cubic polynomial determined by $w_3^{cl}.$ 
\end{example}

\begin{example}
\label{expPetersen}
Let $G=K(2k+1,k)$ be the Kneser graphs for  $k \geq 2.$  Note that $G$ is $\begin{pmatrix} 2k+1-k \\ k \end{pmatrix} = (k+1)$-regular. Since $2k+1 < 3k$ for $k \geq 2,$ we have $n_\triangle(G)=0.$ Then, for any $S \subseteq V(G),$ we deduce that 
\[
w^{cl}_3(K(2k+1,k)_S)  = 3\sigma(k+1) + \sigma = \sigma(3k+4).
\]
Let $PG$ be the Petersen graph, which is isomorphic to $K(5,2).$ Then, 
\[
w^{cl}_3((PG)_S)=10\sigma.
\] For example, consider only one loop at any vertex of $PG$, and without loss of generality we denote $1$ as the looped vertex and $2,3,4$ its adjacent vertices, then the ten closed 3-walks are 1111, 1122, 1211, 1131, 1311, 1141, 1411, 3113, 2112, and 4114. 
\end{example}

Example~\ref{expPetersen} illustrates that $w_3^{cl}(G_S)$ is a non-trivial invariant that depends on $\sigma \geq 1,$ which would otherwise be zero when $\sigma=0.$   Another similar observation is that if $G$ is a connected triangle-free graph, then $w_{3}^{cl}(G_S)$ is also non-zero for $\sigma \geq 1,$ see the next example.    

\begin{example}
Let $G$ be a graph of order $n.$ Suppose that $G$ has no triangles, then by \cite[Theorem 2.3]{Harary1969}, $G$ has at most $\frac{1}{4} n^2$ edges. Consider $G_S$ with $S\subseteq V(G), $ 
then we have
\[
0 \leq w^{cl}_3(G_S) \leq \frac{3}{2} n^2 + n,
\]
where the left equality holds when $\sigma=0,$ and the right equality holds when $G_S=\widehat{K_{\frac{n}{2},\frac{n}{2}}}:$ from \eqref{eq:kmnS} we have
\[
w_3^{cl}(\widehat{K_{{\frac{n}{2},\frac{n}{2}}}})= 3\left[ \left(\frac{n}{2}\right)^2 + \left(\frac{n}{2}\right)^2 \right] + n = \frac{3}{2} n^2 +n,
\]
where $n$ is even.
This aligns with Mantel's theorem, cf. \cite[\S 20, Theorem 3]{Aigner2018}.
\end{example}

\begin{example}
Let $G_S=(C_n)_S$ be the cycle graph of order $n \geq 3$ with $\sigma$ loops. Since $3\sum_{v \in S} d_G(v) = 6\sigma,$ $n_\triangle=1$ if $n=3$ and $n_\triangle=0$ if $n\geq 4.$  we get 
\[
w_3^{cl}((C_n)_S)
\begin{cases}
7\sigma + 6, &n=3, \\
7\sigma, &n\geq 4.
\end{cases}
\]
\end{example}

The following is an example of a connected graph with girth 3. 

\begin{example}
Let $W_n$ be a wheel graph of order $n$ with the ``center'' vertex $w_0.$  Recall that $W_n$ has $m=2(n-1)$ edges and $n-1$ triangles. Then,
\begin{align*}
w_3^{cl}((W_n)_S)
&= 
\begin{cases}
3(3(\sigma-1)+n-1) +6(n-1)+\sigma, & \text{for }w_0,w_1,\cdots, w_{\sigma-1} \in S, \\
3(3\sigma) + 6(n-1) + \sigma, & \text{for }w_1, \cdots, w_\sigma \in S, w_0 \notin S,
\end{cases}\\
&= 
\begin{cases}
10\sigma + 9(n -2), & \text{for }w_0,w_1,\cdots, w_{\sigma-1} \in S, \\
10\sigma + 6(n-1), & \text{for }w_1, \cdots, w_\sigma \in S, w_0 \notin S. 
\end{cases}
\end{align*}
    
\end{example}

\begin{example}
Let $G$ be a graph of order $2n$ and size $m=n^2+1.$ Such $G$ contains $n$ triangles (cf. \cite[pp 19]{Harary1969}). Consider $G_S$ with $S\subseteq V(G), |S|=\sigma,$ since $\sum_{v_i \in S}d_G(v_i) \leq 2m,$ we have
\[
 w^{cl}_3(G_S) \leq 6(n^2 +n+1) + \sigma.
\]
\end{example}

\subsection{Closed 4-walks on $G_S$}

Now, we discuss the number $w_4^{cl}(G_S)$ of closed $4$-walks on $G_S,$ which involves many more cases than that of $w_3^{cl}(G_S).$

\begin{theorem}
\label{closed4GS}
Let $G_S$ be a connected self-loop graph of order $n\geq 2,$ size $m \geq 1,$ and $|S|=\sigma.$ Let $w^{cl}_4(G_S)$ be the total number of closed 4-walks on $G_S.$ Then,
\begin{align}
w_4^{cl}(G_S) 
&=\sigma + 2(M_1(G) - m)
+ 6 \sum_{v_i \in S} d_G(v_i) - 2 \sum_{v_i \in S} n_1(v_i)  \nonumber \\
&\quad+ 8 (n_{\triangle_1}(G_S) + 2 n_{\triangle_2}(G_S) + 3 n_{\triangle_3}(G_S) + n_\square(G) + 3n_\boxtimes(G)), \label{eq:closed4GS}
\end{align}
where 
\begin{itemize}
\item $M_1(G)=\sum_{v \in V(G)} d_G^2(v)$ is the first Zagreb index of $G,$
\item $n_1(v_i)$ is the quantity \eqref{eq:n1a}, 
\item for $r=1,2,3$, $n_{\triangle_r}(G_S)$  is the number of triangles in $G_S$ such that $r$ vertex in $S$ and $3-r$ vertices in $V(G)\backslash S,$  
\item $n_\square(G)$ is the number of distinct $C_4$ in $G,$
\item $n_\boxtimes(G)$ is the number of distinct $K_4$ in $G.$
\end{itemize}
\end{theorem}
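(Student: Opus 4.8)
The plan is to extend the combinatorial bookkeeping of Theorem~\ref{closed3}, organized this time by the number of distinct vertices a closed $4$-walk visits. Write a closed $4$-walk as $s_0s_1s_2s_3s_0$, where each consecutive step is either an edge of $G$ or a loop at a vertex of $S$; the structural fact I would use repeatedly is that, for length $4$, any closed $4$-walk that uses two or more loop-steps visits at most two distinct vertices (otherwise the two edge-steps would have to form a path $v_av_bv_c$, but then each of its two edges is a bridge traversed an odd number of times, which is impossible in a closed walk). Accordingly I would partition all closed $4$-walks according to $d:=\lvert\{s_0,s_1,s_2,s_3\}\rvert\in\{1,2,3,4\}$ and count each class separately; one can equivalently proceed per base vertex as in Theorem~\ref{closed3}, but the $d$-decomposition keeps the casework smaller.

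For $d=1$ the only walk is $vvvvv$ with $v\in S$, contributing $\sigma$. For $d=2$ the walk is confined to a single edge $v_iv_j$ together with the loops at its endpoints; enumerating the patterns $s_1s_2s_3\in\{v_i,v_j\}^3$ (and symmetrically from the $v_j$-side) shows that the two alternating walks $v_iv_jv_iv_jv_i$ and $v_jv_iv_jv_iv_j$ occur for every edge, while every other pattern requires $v_i\in S$, or $v_j\in S$, or both; sorting the edges by how many endpoints meet $S$ gives $2$ walks per edge disjoint from $S$, $6$ per edge with exactly one endpoint in $S$, and $14$ per edge inside $S$. Using \eqref{eq:n1n2}, together with the fact that $\sum_{v_i\in S}n_1(v_i)$ counts the edges from $S$ to $V\setminus S$ while $\sum_{v_i\in S}n_2(v_i)$ counts twice the edges inside $S$, the $d=2$ total collapses to $2m+6\sum_{v_i\in S}d_G(v_i)-2\sum_{v_i\in S}n_1(v_i)$. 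For $d=4$ all four vertices are distinct, so every step is an edge and the walk is a traversal of a $4$-cycle $s_0s_1s_2s_3s_0$, with exactly $8$ walks per $C_4$-subgraph; since each $K_4$ contains three such $C_4$'s which the paper's convention removes from $n_\square(G)$, the $d=4$ total is $8\big(n_\square(G)+3n_\boxtimes(G)\big)$.

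The substantive case is $d=3$, which I would split into loop-free walks and walks using exactly one loop. Because a closed $4$-walk on a triangle cannot use all three of its edges (Euler's condition fails on the corresponding multigraph), the edge-set of a loop-free closed $4$-walk on exactly three vertices is always a single path $P_3=v_av_bv_c$, and each such path supports exactly four of them, namely $v_bv_av_bv_cv_b$, $v_bv_cv_bv_av_b$, $v_av_bv_cv_bv_a$, $v_cv_bv_av_bv_c$; hence the loop-free $d=3$ count is $4\sum_{v\in V(G)}\binom{d_G(v)}{2}=2\big(M_1(G)-2m\big)$, independent of $S$. A closed $4$-walk on three vertices that uses a loop uses exactly one loop, at some $v\in S$ which then lies on a triangle $\Delta$ (contracting the loop step leaves a traversal of $\Delta$); conversely, inserting the step $v\to v$ into either of the two traversals of $\Delta$ produces a cyclic word of length $4$ that, read from each of its four positions, yields four closed $4$-walks, so $8$ closed $4$-walks per incidence $(v,\Delta)$ with $v\in S\cap\Delta$. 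Summing $8\lvert S\cap\Delta\rvert$ over all triangles $\Delta$ of $G$ gives $8\big(n_{\triangle_1}(G_S)+2n_{\triangle_2}(G_S)+3n_{\triangle_3}(G_S)\big)$. Adding the four contributions and merging the $2m$ from $d=2$ with the $2(M_1(G)-2m)$ from $d=3$ into $2(M_1(G)-m)$ yields \eqref{eq:closed4GS}.

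The main obstacle is the exhaustiveness and non-overcounting of the $d=3$ enumeration: one must be certain that two loop-steps are genuinely impossible, that the twelve loop-free walks supported on a triangle are each counted exactly once as $4\times 3$ path-walks, and that the single-loop walks carry weight $\lvert S\cap\Delta\rvert$ per triangle. As an independent check I would recompute $w_4^{cl}(G_S)=\mathrm{Tr}\big(A(G_S)^4\big)=\sum_{i,j}\big((A(G_S)^2)_{ij}\big)^2$ from $(A(G_S)^2)_{ii}=d_G(v_i)+\mathbf{1}_{\{v_i\in S\}}$ and, for $i\neq j$, $(A(G_S)^2)_{ij}=c_{ij}+\mathbf{1}_{\{v_iv_j\in E(G)\}}\big(\mathbf{1}_{\{v_i\in S\}}+\mathbf{1}_{\{v_j\in S\}}\big)$, where $c_{ij}$ is the number of common neighbours of $v_i$ and $v_j$ in $G$; expanding the squares and using $\sum_\Delta\lvert S\cap\Delta\rvert=n_{\triangle_1}(G_S)+2n_{\triangle_2}(G_S)+3n_{\triangle_3}(G_S)$ reproduces \eqref{eq:closed4GS} with far less casework, which both verifies the combinatorial count and matches the quantities appearing in the statement. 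As in Theorem~\ref{closed3}, the symmetry \eqref{eq:n1sum} is what allows the cross-cut edge contributions to be rewritten through $v_i\in S$ alone.
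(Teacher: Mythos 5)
Your count is correct and arrives at exactly \eqref{eq:closed4GS}, but your bookkeeping is organized differently from the paper's. The paper enumerates closed $4$-walks per base vertex in eleven cases (quadruple loop; one edge; two edges at a common vertex; a $P_3$; $(K_2)_S$ with one or two looped endpoints; triangles with one, two, or three loops; $C_4$; $K_4$) and then compresses the result using \eqref{eq:n1n2} and \eqref{eq:n1sum}. You instead stratify by the number $d$ of distinct vertices visited and count all based walks supported on a given subgraph at once: the per-edge totals $2/6/14$ according to how many endpoints lie in $S$ (which collapse directly to $2m+6\sum_{v_i\in S}d_G(v_i)-2\sum_{v_i\in S}n_1(v_i)$, absorbing the paper's Cases 2, 5, 6), the $4$ walks per $P_3$ giving $2(M_1(G)-2m)$, the $8$ walks per incidence $(v,\Delta)$ with $v\in S\cap\Delta$ giving $8\sum_\Delta|S\cap\Delta|=8(n_{\triangle_1}+2n_{\triangle_2}+3n_{\triangle_3})$ (the paper's Cases 7--9), and $8$ per $C_4$ with $24$ per $K_4$. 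Your structural lemma that two or more loop-steps confine the walk to at most two vertices is sound (the cut-parity argument, or equivalently contracting loop-steps to get a closed $2$-walk in $G$), and it is what guarantees exhaustiveness of the $d=3$ split; the paper achieves the same completeness implicitly through its case list. What your route buys is less identity-juggling at the end (in particular \eqref{eq:n1sum} is not needed if one sorts edges by $|e\cap S|$) and a cleaner handling of the triangle contribution via the weight $|S\cap\Delta|$; what the paper's per-vertex route buys is that each case is a short explicit list of walk words. Your proposed independent verification via $\mathrm{Tr}(A(G_S)^4)=\sum_{i,j}\big((A(G_S)^2)_{ij}\big)^2$ with the stated entries of $A(G_S)^2$ is a genuinely different, essentially algebraic derivation not used in the paper, and it is a good cross-check of the coefficients in \eqref{eq:closed4GS}.
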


\begin{proof}

Let $v_i \in S.$ Then, there are eleven possible closed 4-walks at $v_i:$


\underline{Case 1}: 
One quadruple looping $(iiiii)$ at each $v_i,$ gives a total of $\sigma$ closed 4-walks. 
\vspace{0.5em}

\underline{Case 2}: 
For $v_j \in N(v_i),$ there is one closed 4-walk $(ijiji)$ at $v_i$ that traverses through an edge only and not via a loop, sums up to $d_G(v_i)$ walks. Overall, this case yields
$
\sum_{v \in V(G)} d_G(v)
$
closed 4-walks.
\vspace{1em}

\underline{Case 3}:
For $v_j,v_k \in N(v_i)$ and $v_j \neq v_k,$ there are two closed 4-walks $(ijiki, ikiji)$ at $v_i$ that traverse through two edges only and not via a loop, sums up to 
\[
2\begin{pmatrix} d_G(v_i) \\ d_G(v_i)-2 \end{pmatrix} = \frac{d_G(v_i)!}{(d_G(v_i)-2)!} = d_G(v_i) (d_G(v_i)-1)
\]
walks. Overall, this case yields 
$
\sum_{v \in V(G)} d_G(v)(d_G(v)-1)
$
closed 4-walks.
\vspace{1em}

\underline{Case 4}:
Consider a path $P_3$ with vertices $v_i,v_j,v_k$ such that $v_j \in N(v_i) \cap N(v_k)$. Then, there is one closed 4-walk $(ijkji)$ at $v_i$ that traverses via $P_3$ only and not via a loop nor a triangle. Thus, at $v_i$ we have 
$
\sum_{v_j \in N(v_i)} (d_G(v_j)-1) 
$
closed 4-walks.  
Summing over all $v_i \in V(G),$ we obtain
\[
\sum_{v_i \in V(G)}\left( \sum_{v_j \in N(v_i)}  d_G(v_j) \right) - \sum_{v_i \in V(G)}d_G(v_i) .
\]
\vspace{0.2em}

\underline{Case 5}:
Consider a $(K_2)_S$ with vertices $v_i \in S, v_j\notin S$. Then, there are three closed 4-walks ($iiiji$, $iijii$, $ijiii$) at $v_i$ and one closed 4-walk ($jiiij$) at $v_j \notin S,$ that must traverse through a loop and only one edge. The total number of closed 4-walks is 
\[
3 \sum_{v_i \in S} n_1(v_i) + \sum_{v_j \notin S} n_1(v_j).  
\] 
\vspace{0.2em}

\underline{Case 6}:
Consider a $(K_2)_S$ with vertices $v_i, v_j \in S$. Then, there are six closed 4-walks ($iiiji$, $iijii$, $ijiii$, $iijji$, $ijjii$, $ijjji$) at $v_i$ that traverse through at least one loop and an edge. The total number of closed 4-walks is 
\[
\sum_{v_i \in S} 6n_2(v_i).
\]

\underline{Case 7}:
Consider a $(K_3)_S$ with vertices $v_i \in S$ and $v_j,v_k \notin S.$ Then, there are four closed 4-walks ($iijki$, $iikji$, $ijkii$, $ikjii$) that traverse through a triangle and a loop.
Thus, the total closed 4-walks in this case is 
\[
\sum_{v_i \in S} 4 n_{\triangle_1}(v_i) + \sum_{v_i \notin S} 2n_{\triangle_1}(v_i) = 8n_{\triangle_2}(G_S).
\] 
\vspace{0.2em}

\underline{Case 8}:
Consider a $(K_3)_S$ with vertices $v_i, v_j \in S$ and $v_k \notin S.$ Then, there are six closed 4-walks ($iijki$, $iikji$, $ijkii$, $ikjii$, $ijjki$, $ikjji$) at $v_i$ that traverse through a triangle and a loop.  The total number of closed 4-walks is 
\[
\sum_{v_i \in S} 6 n_{\triangle_2}(v_i) + \sum_{v_i \notin S} 4n_{\triangle_2}(v_i) = 16n_{\triangle_2}(G_S).
\] 
\vspace{0.2em}

\underline{Case 9}:
Consider a $(K_3)_S$ with vertices $v_i, v_j, v_k \in S.$ Then, there are eight closed 4-walks ($iijki$, $iikji$, $ijkii$, $ikjii$, $ijjki$, $ikjji,$ $ikkji$, $ijkki$) at $v_i$ that traverse through a triangle and a loop. The total number of closed 4-walks is 
\[
\sum_{v_i \in S} 8 n_{\triangle_3}(v_i) = 24 n_{\triangle_3}(G_S).
\] 


\begin{figure}[h]
\vspace{-1em}
\begin{minipage}[b]{0.23\linewidth}
    \centering
      \begin{tikzpicture}[node distance={13mm}, very thin, main/.style = {draw, circle, fill, inner sep=1.5pt, minimum size=0pt}]  
                \node[main] (1) [label=left:$i$]{}; 
                \node[main] (2) [below left of=1]
                [label=left:$j$]{};
                \node[main] (3) [below right of=1][label=right:$k$]{};

            \draw (1) -- (2); 
            \draw (1) -- (3);
            \draw (2) -- (3);
        \end{tikzpicture}
    \caption*{$(a)$}
    \end{minipage}
\begin{minipage}[b]{0.23\linewidth}
    \centering
      \begin{tikzpicture}[node distance={13mm}, very thin, main/.style = {draw, circle, fill, inner sep=1.5pt, minimum size=0pt}] 
                \node[main] (1) [label=left:$i$]{}; 
                \node[main] (2) [below left of=1]
                [label=left:$j$]{};
                \node[main] (3) [below right of=1][label=right:$k$]{};

            \draw (1) -- (2); 
            \draw (1) -- (3);
            \draw (2) -- (3); 
            \draw (1) to [out=130,in=50,looseness=22] (1);
        \end{tikzpicture}
    \caption*{$(b)$}
\end{minipage}
\begin{minipage}[b]{0.23\linewidth}
    \centering
      \begin{tikzpicture}[node distance={13mm}, very thin, main/.style = {draw, circle, fill, inner sep=1.5pt, minimum size=0pt}] 
                \node[main] (1) [label=left:$i$]{}; 
                \node[main] (2) [below left of=1]
                [label=left:$j$]{};
                \node[main] (3) [below right of=1][label=right:$k$]{};

            \draw (1) -- (2); 
            \draw (1) -- (3);
            \draw (2) -- (3); 
            \draw (1) to [out=130,in=50,looseness=22] (1);
            \draw (3) to [out=120,in=40,looseness=20] (3);
        \end{tikzpicture}
    \caption*{$(c)$}
\end{minipage}
\begin{minipage}[b]{0.23\linewidth}
    \centering
      \begin{tikzpicture}[node distance={13mm}, very thin, main/.style = {draw, circle, fill, inner sep=1.5pt, minimum size=0pt}] 
                \node[main] (1) [label=left:$i$]{}; 
                \node[main] (2) [below left of=1]
                [label=left:$j$]{};
                \node[main] (3) [below right of=1][label=right:$k$]{};

            \draw (1) -- (2); 
            \draw (1) -- (3);
            \draw (2) -- (3); 
            \draw (1) to [out=130,in=50,looseness=22] (1);            
            \draw (2) to [out=140,in=60,looseness=20] (2);
            \draw (3) to [out=120,in=40,looseness=20] (3);
        \end{tikzpicture}
    \caption*{$(d)$}
\end{minipage}
\caption{$(K_3)_S$ with $|S|=0,1,2,$ and 3 respectively.}
\end{figure}

\underline{Case 10}:
Consider a $C_4$ with vertices $v_i,v_j,v_k,v_l$ (whether it has loops or not, labeled in a clockwise fashion accordingly). Then, there are two closed 4-walks ($ijkli$, $ilkji$) at $v_i$ that must traverse through all four vertices of $C_4.$ Thus, each vertex gives $2n_\square(v_i)$ walks that sums up to a total
\begin{equation}
\label{eq:obs4}
    \sum_{v_i\in S} 2n_{\square}(v_i) + \sum_{v_i \notin S} 2n_{\square}(v_i) = 8 n_\square(G).
\end{equation}
\vspace{0.2em}

\underline{Case 11}:
Consider a $K_4$ with vertices $v_i,v_j,v_k,v_l$ (whether has loops or not, labeled in a clockwise fashion accordingly). Then, there are six closed 4-walks ($ijkli$, $ilkji$, $ijlki$, $iljki$, $iklji$, $ikjli$) at $v_i$ that must traverse through all four vertices of $C_4.$ For each vertex we have $6 n_{\boxtimes}(v_i)$ walks, sums up to a total
    \begin{equation}
    \label{eq:obs5}
        \sum_{v_i\in S} 6n_{\boxtimes}(v_i) +  \sum_{v_i \notin S} 6n_{\boxtimes}(v_i) = 24 n_\boxtimes(G).
    \end{equation}


Some simplification can be done as follows:
\begin{itemize}
    \item Combining Case 2 and Case 4, we obtain
    \begin{equation}
    \label{eq:obs1}
    \sum_{v_i \in V(G)}\sum_{v_j \in N(v_i)}  d_G(v_j) = \sum_{v \in V(G)} d^2_G(v).
    \end{equation}
    
    \item Combining Case 5 and 6, we obtain 
    \begin{align}
    3 \sum_{v_i \in S} &n_1(v_i) + 6 \sum_{v_i \in S} n_2(v_i) + \sum_{v_i \notin S} n_1(v_i) \nonumber\\
    &= 6 \sum_{v_i \in S} d_G(v_i) - 3 \sum_{v_i \in S} n_1(v_i )+ \sum_{v_i \notin S} n_1(v_i) \nonumber\\
    &= 6 \sum_{v_i \in S} d_G(v_i) - 2 \sum_{v_i \in S} n_1(v_i ), \label{eq:obs2}
    \end{align}
    where we apply \eqref{eq:n1n2} in the first equality and \eqref{eq:n1sum} in the last equality.  
\end{itemize}

Finally, summing all possible cases above, we obtain
\begin{align*}
w_4^{cl}(G_S) 
&= \sigma + \left(\sum_{v \in V(G)} d_G(v) + \sum_{v_i \in V(G)} \sum_{v_j \in N(v_i)} (d_G(v_j)-1)\right) + \sum_{v \in V(G)} d_G(v)(d_G(v)-1)  \nonumber\\
&\quad+ \left(3\sum_{v_i \in S} n_1(v_i)  + 6 \sum_{v_i \in S} n_2(v_1) + \sum_{v_i \in S} n_1(v_i) \right) + 8n_{\triangle_1}(G_S) + 16n_{\triangle_2}(G_S) \\
&\quad+ 24n_{\triangle_3}(G_S) 
+ 8n_\square(G) + 24n_\boxtimes(G) \\
&= \sigma + 2\sum_{v\in V(G)} d^2_G(v) - \sum_{v \in V(G)}d_G(v) + 6 \sum_{v_i \in S} d_G(v_i) - 2 \sum_{v_i \in S} n_1(v_i ) \\
&\quad+   8 (n_{\triangle_1}(G_S) + 2 n_{\triangle_2}(G_S) + 3 n_{\triangle_3}(G_S) + n_\square(G) + 3n_\boxtimes(G)). \qedhere
\end{align*}
\end{proof}

\begin{corollary}
\label{closed4G}
When $S=\emptyset$ (i.e., $\sigma=0$), we obtain immediately from Theorem~\ref{closed4GS} that 
\[
w^{cl}_4(G) = 2 \sum_{v \in V(G)} d^2_G(v) - \sum_{v \in V(G)} d_G(v) + 8(n_\square(G) + 3n_\boxtimes(G)),
\]
or equivalently,
\begin{equation}
\label{eq:closed4G}
w^{cl}_4(G) = 2 \left(M_1(G)-m\right) + 8(n_\square(G) + 3n_\boxtimes(G)).
\end{equation}
\end{corollary}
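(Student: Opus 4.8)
The plan is to obtain Corollary \ref{closed4G} as a direct specialization of Theorem \ref{closed4GS} to the loopless case $S=\emptyset$, $\sigma=0$. First I would substitute $\sigma=0$ into formula \eqref{eq:closed4GS} and track which terms reference $S$: the leading summand $\sigma$ vanishes; the sums $\sum_{v_i\in S}d_G(v_i)$ and $\sum_{v_i\in S}n_1(v_i)$ are empty, hence equal to $0$. This already kills the terms $6\sum_{v_i\in S}d_G(v_i)$ and $-2\sum_{v_i\in S}n_1(v_i)$.

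Next I would observe that for each $r=1,2,3$ the quantity $n_{\triangle_r}(G_S)$ counts triangles having exactly $r$ vertices in $S$; since the defining conditions in \eqref{eq:ntri1}--\eqref{eq:ntri3} require at least one (indeed $r$) of the triangle's vertices to lie in $S$, they are vacuous when $S=\emptyset$, so $n_{\triangle_1}(G_S)=n_{\triangle_2}(G_S)=n_{\triangle_3}(G_S)=0$. By contrast, the definitions \eqref{eq:nC4} and \eqref{eq:nK4} of $n_\square$ and $n_\boxtimes$ make no reference to $S$, so these counts are unchanged. After these cancellations, \eqref{eq:closed4GS} collapses to $2(M_1(G)-m)+8\bigl(n_\square(G)+3n_\boxtimes(G)\bigr)$, which is \eqref{eq:closed4G}.

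To recover the first (expanded) display, I would merely unfold $M_1(G)=\sum_{v\in V(G)}d_G^2(v)$ and use the handshake identity $\sum_{v\in V(G)}d_G(v)=2m$, so that $2(M_1(G)-m)=2\sum_{v\in V(G)}d_G^2(v)-2m=2\sum_{v\in V(G)}d_G^2(v)-\sum_{v\in V(G)}d_G(v)$; the two displayed forms are therefore equivalent. There is no real obstacle here: the only point requiring a moment's care is verifying that the $n_{\triangle_r}(G_S)$ genuinely vanish, which is immediate from their definitions. As an optional sanity check, one may confirm that the resulting formula agrees with the classical expression for $w_4^{cl}(G)$ recorded in \cite{GutmanDas2004} and references therein.
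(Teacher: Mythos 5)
Your proposal is correct and follows exactly the route the paper intends: the corollary is stated as an immediate specialization of Theorem~\ref{closed4GS}, obtained by setting $\sigma=0$, noting that all $S$-indexed sums and the counts $n_{\triangle_r}(G_S)$ vanish while $n_\square$ and $n_\boxtimes$ are independent of $S$. Your additional step identifying the two displayed forms via $M_1(G)=\sum_v d_G^2(v)$ and $\sum_v d_G(v)=2m$ is exactly the intended equivalence, so nothing is missing.
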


\begin{remark}
\begin{enumerate}[(i)]
\item 
The fourth spectral moment of $G_S,$  $\mathsf{M}_4(G_S)=\sum^n_{i=1} \lambda^4_i(G_S),$ is given by \eqref{eq:closed4GS}.
\item A formula for the fourth spectral moment (for simple graphs) was already reported in \cite[pp 86-87, references therein]{GutmanDas2004}, which reads:
\begin{equation}
\label{eq:closed4GGut}
w^{cl}_4(G)= 2(M_1(G) - m) + 8Q.
\end{equation}
where 
$Q$ is the total number of 4-cycles $C_4$ contained in $G.$ 
Observe that \eqref{eq:closed4G} and \eqref{eq:closed4GGut} coincide because $K_4$ (if any) yields three $C_4$ as illustrated below.  

\begin{figure}[H]
\begin{minipage}[b]{0.23\linewidth}
    \centering
          \begin{tikzpicture}[node distance={15mm}, very thin, main/.style = {draw, circle, fill, inner sep=1.5pt, minimum size=0pt}]  
            \node[main] (1) [label=left:$i$]{}; 
            \node[main] (2) [label=right:$j$][right of=1] {}; 
            \node[main] (3) [below of=1] [label=left:$l$]{}; 
            \node[main] (4) [right of=3] [label=right:$k$]{}; 

            \draw (1) -- (2); 
            \draw (1) -- (3);
            \draw (1) -- (4);
            \draw (2) -- (4);
            \draw (2) -- (3);
            \draw (3) -- (4);
        \end{tikzpicture} 
    \caption*{$K_4$}
\end{minipage}
\begin{minipage}[b]{0.23\linewidth}
    \centering
      \begin{tikzpicture}[node distance={15mm}, very thin, main/.style = {draw, circle, fill, inner sep=1.5pt, minimum size=0pt}]  
            \node[main] (1) [label=left:$i$]{}; 
            \node[main] (2) [right of=1] [label=right:$j$]{}; 
            \node[main] (3) [below of=1] [label=left:$l$]{}; 
            \node[main] (4) [right of=3] [label=right:$k$]{}; 

        \draw (1) -- (2); 
        \draw (1) -- (3);
        \draw (2) -- (4);
        \draw (3) -- (4);
    \end{tikzpicture} 
    \caption*{$C_4$}
\end{minipage}
\begin{minipage}[b]{0.23\linewidth}
    \centering
      \begin{tikzpicture}[node distance={15mm}, very thin, main/.style = {draw, circle, fill, inner sep=1.5pt, minimum size=0pt}] 
                \node[main] (1) [label=left:$i$]{}; 
                \node[main] (2) [right of=1] [label=right:$j$]{}; 
                \node[main] (3) [below of=1] [label=left:$l$]{}; 
                \node[main] (4) [right of=3] [label=right:$k$]{}; 
 
            \draw (1) -- (2);
            \draw (1) -- (4);
            \draw (2) -- (3);
            \draw (3) -- (4); 
        \end{tikzpicture}
    \caption*{$C_4$}
\end{minipage}
\begin{minipage}[b]{0.23\linewidth}
    \centering
      \begin{tikzpicture}[node distance={15mm}, very thin, main/.style = {draw, circle, fill, inner sep=1.5pt, minimum size=0pt}] 
            \node[main] (1) [label=left:$i$]{}; 
                \node[main] (2) [right of=1] [label=right:$j$]{}; 
                \node[main] (3) [below of=1] [label=left:$l$]{}; 
                \node[main] (4) [right of=3] [label=right:$k$]{}; 

                \draw (1) -- (3); 
                \draw (1) -- (4);
                \draw (2) -- (3);
                \draw (2) -- (4); 
        \end{tikzpicture}
    \caption*{$C_4$}
\end{minipage}
\end{figure}

\end{enumerate}


\end{remark}

\begin{example}
Let us illustrate the formula \eqref{eq:closed4GS} with a concrete example. Consider $(K_4)_S$ with $|S|=3,$ with loops at $v_i,v_j,v_l,$ respectively. One can verify that $\sigma=3, M_1(K_4)=36,$ $m=6,$ $\sum_{v_i \in S} d_G(v_i)=9,$ $\sum_{v_i \in S} n_1(v_i)=3$ (formed by edges $v_iv_k,$ $v_jv_k,$ and $v_kv_l$), $n_{\triangle_1}=0,$ $n_{\triangle_2}=3$ (see Figure 2), and $n_{\triangle_3}=1$ (formed by $\triangle(v_i,v_j,v_l)$).  Then, 
\[
w_4^{cl}((K_4)_S) = 3+2(30)+6(9)-2(3)+8(6+3+3) = 207.
\]
Indeed, this coincides with $\sum \lambda^4_i((K_4)_S)=\mathrm{Tr}\:A^4((K_4)_S)= 57 \times 3 + 36=207.$

\begin{figure}[H]
\begin{minipage}[b]{0.21\linewidth}
    \centering
          \begin{tikzpicture}[node distance={15mm}, very thin, main/.style = {draw, circle, fill, inner sep=1.5pt, minimum size=0pt}]  
            \node[main] (1) [label=left:$i$]{}; 
            \node[main] (2) [label=right:$j$][right of=1] {}; 
            \node[main] (3) [below of=1] [label=left:$l$ \vspace{-5em}]{}; 
            \node[main] (4) [right of=3] [label=right:$k$]{}; 

            \draw (1) -- (2); 
            \draw (1) -- (3);
            \draw (1) -- (4);
            \draw (2) -- (4);
            \draw (2) -- (3);
            \draw (3) -- (4);
            \draw (1) to [out=130,in=50,looseness=22] (1);       
            \draw (2) to [out=130,in=50,looseness=22] (2);
            \draw (3) to [out=170,in=90,looseness=20] (3);
        \end{tikzpicture} 
    \caption*{$(K_4)_S$}
    \end{minipage}  
\begin{minipage}[b]{0.21\linewidth}
    \centering
          \begin{tikzpicture}[node distance={15mm}, very thin, main/.style = {draw, circle, fill, inner sep=1.5pt, minimum size=0pt}]  
            \node[main] (1) [label=left:$i$]{}; 
            \node[main] (3) [below of=1] [label=left:$l$ \vspace{-5em}]{}; 
            \node[main] (4) [right of=3] [label=right:$k$]{};

            \draw (1) -- (3);
            \draw (1) -- (4);
            \draw (3) -- (4);
            \draw (1) to [out=130,in=50,looseness=22] (1);    
            \draw (3) to [out=170,in=90,looseness=20] (3);
        \end{tikzpicture} 
    \hspace{-4em} 
    \caption*{$\triangle(v_i,v_j,v_l)$}
    \end{minipage}  
\begin{minipage}[b]{0.21\linewidth}
    \centering
          \begin{tikzpicture}[node distance={15mm}, very thin, main/.style = {draw, circle, fill, inner sep=1.5pt, minimum size=0pt}]  
            \node[main] (1) [label=left:$i$]{}; 
            \node[main] (2) [label=right:$j$][right of=1] {}; 
            \node[main] (4) [right of=3] [label=right:$k$]{}; 

            \draw (1) -- (2); 
            \draw (1) -- (4);
            \draw (2) -- (4);
            \draw (1) to [out=130,in=50,looseness=22] (1);       
            \draw (2) to [out=130,in=50,looseness=22] (2);

        \end{tikzpicture} 
    \hspace{-4em} 
    \caption*{$\triangle(v_i,v_j,v_k)$}
    \end{minipage}    
\begin{minipage}[b]{0.21\linewidth}
    \centering
          \begin{tikzpicture}[node distance={15mm}, very thin, main/.style = {draw, circle, fill, inner sep=1.5pt, minimum size=0pt}]  
            \node[main] (2) [label=right:$j$][right of=1] {}; 
            \node[main] (3) [below of=1] [label=left:$l$]{}; 
            \node[main] (4) [right of=3] [label=right:$k$]{}; 

            \draw (2) -- (4);
            \draw (2) -- (3);
            \draw (3) -- (4);
   
            \draw (2) to [out=130,in=50,looseness=22] (2);
            \draw (3) to [out=130,in=50,looseness=22] (3);
        \end{tikzpicture} 
   \hspace{-4em} 
   \caption*{$\triangle(v_j,v_k,v_l)$}
  \end{minipage}
    \caption{}
  \end{figure}
\end{example}

In the following, we derive the formula for several graph families.

\begin{example}
\label{expKmn}
Let $G=K_{a,b}$ be the complete bipartite graph of parts $(A,B)$ with size $a=|A|,b=|B| \geq 1.$ Since $K_{a,b}$ contains only even cycles, the only contribution in closed $4$-walks is by $n_{\square}(K_{a,b}).$ It is known that the number of $2k$-cycles in $K_{a,b}$ is given by 
\[
\begin{pmatrix}
b \\ k 
\end{pmatrix}
\begin{pmatrix}
a \\ k 
\end{pmatrix}
\frac{(k-1)!k!}{2}.
\]
For a $4$-cycle, i.e. when $k=2,$ we have 
\[
n_{\square}(K_{a,b})= \frac{1}{4} ab(a-1)(b-1).
\]
Since $\sum_{v \in V(G)} d_G^2(v) = ab^2 + ba^2$ and $\sum_{v \in V(G)} d_G(v)= 2ab,$ by a direct computation, we obtain 
\[
w^{cl}_4(K_{a,b}) = 2a^2b^2.
\]
Let $S \subseteq V(G)$ with $|S|=\sigma=\sigma_A + \sigma_B.$ Then, 
\begin{align*}
\sum_{v_i \in S} d_G (v_i) &= b\sigma_A + a\sigma_B, \\
\sum_{v_i \in S} n_1(v_i) &= \sigma_A(b-\sigma_B) + \sigma_B(a-\sigma_A)= b\sigma_A + a\sigma_B -2\sigma_A\sigma_B. 
\end{align*}
Combining all, we obtain 
\begin{equation}
w_4^{cl}((K_{a,b})_S) = \sigma_A(4b+1) + \sigma_B(4a+1) + 4\sigma_A\sigma_B + 2a^2b^2,
\end{equation}
for which one observes that $w_4^{cl}((K_{a,b})_S)= w_4^{cl}(K_{a,b})$ when $S=\emptyset.$ Such $w_4^{cl}((K_{a,b})_S)$ is independent of the location of loops in any parts of vertices.
\end{example}


\begin{example}
\label{expPn}
Let $G=P_n$ be the path of $n$ vertices. It is immediate to obtain $M_1(G)=4n-6$ and thus $2(M_1(G)-m)=2(3n-5).$ Consider $(P_n)_S$ with $0\leq \sigma \leq n.$ Then,
\[
\sum_{v_i \in S} d_G(v_i)=2\sigma_{ne} + \sigma_e,
\] 
where $\sigma_e$ (resp. $\sigma_{ne}$) is the number of endpoint vertices (resp. non-endpoint) with a loop attached.

Suppose all loops are non-adjacent to each other, then $\sum_{v_i\in S} n_1(v_i) = 2\sigma_{ne} + \sigma_e.$ It follows that
\begin{align*}
w_4^{cl}((P_n)_S) 
&= 2(3n-5)+\sigma + 4(2\sigma_{ne}+\sigma_e).
\end{align*}

Suppose $\sigma_e=0.$ Without loss of generality, let $n \geq 4.$ If there are adjacent loops in the sense whose neighborhood contains at least one loop, then 
$
\sum_{v_i \in S} n_1(v_i) = 2 (n_{P'_{k}} + n_{na}),
$ where $n_{P'_k}$ be the total number of paths of adjacent $k$-loops for $k=2,3,...$, and $\sigma_{na}$ the number of non-adjacent loops. Thus, 
\[
w_4^{cl}((P_n)_S) =2(3n-5) + 13\sigma - 4(n_{P'_{k}} + \sigma_{na}).
\] 

To illustrate this, consider the following self-loop paths. 
\begin{figure}[H]
 \begin{center}
          \begin{tikzpicture}[node distance={12mm}, very thin, main/.style = {draw, circle, fill, inner sep=1.5pt, minimum size=0pt}]  
            \node[main] (1) [label=left:$Q_1$:]{}; 
            \node[main] (2) [right of=1]{}; 
            \node[main] (3) [right of=2]{}; 
            \node[main] (4) [right of=3]{}; 
            \node[main] (5) [right of=4]{};
            \node[main] (6) [right of=5]{}; 
            \node[main] (7) [right of=6]{}; 
            \node[main] (8) [right of=7]{}; 

            \draw (1) -- (2); 
            \draw (2) -- (3);
            \draw (3) -- (4);
            \draw (4) -- (5);
            \draw (5) -- (6);
            \draw (6) -- (7);
            \draw (7) -- (8);
            \draw (2) to [out=130,in=50,looseness=22] (2);       
            \draw (3) to [out=130,in=50,looseness=22] (3);
            \draw (5) to [out=130,in=50,looseness=22] (5);
            \draw (7) to [out=130,in=50,looseness=22] (7);
        \end{tikzpicture} 
    \end{center}
\end{figure}
\begin{figure}[H]
 \begin{center}
          \begin{tikzpicture}[node distance={12mm}, very thin, main/.style = {draw, circle, fill, inner sep=1.5pt, minimum size=0pt}]  
            \node[main] (1) [label=left:$Q_2$:]{}; 
            \node[main] (2) [right of=1]{}; 
            \node[main] (3) [right of=2]{}; 
            \node[main] (4) [right of=3]{}; 
            \node[main] (5) [right of=4]{};
            \node[main] (6) [right of=5]{}; 
            \node[main] (7) [right of=6]{}; 
            \node[main] (8) [right of=7]{}; 

            \draw (1) -- (2); 
            \draw (2) -- (3);
            \draw (3) -- (4);
            \draw (4) -- (5);
            \draw (5) -- (6);
            \draw (6) -- (7);
            \draw (7) -- (8);
            \draw (2) to [out=130,in=50,looseness=22] (2);       
            \draw (3) to [out=130,in=50,looseness=22] (3);
            \draw (5) to [out=130,in=50,looseness=22] (5);
            \draw (6) to [out=130,in=50,looseness=22] (6);
            \draw (7) to [out=130,in=50,looseness=22] (7);
        \end{tikzpicture} 
    \end{center}
\end{figure}
For path $Q_1,$ $n_{P'_k}=1$ and $\sigma_{na}=2,$ giving $w_4^{cl} = 2(3(8)-5)+13(4)- 4(1+2)=78.$ On the other hand, for path $Q_2,$ we have $n_{P'_k}=2$ (of length 2 and 3, respectively) and $\sigma_{na}=0.$ Thus, $w_4^{cl} = 2(3(8)-5)+13(5)- 4(2+0)=95.$

The case for $\sigma_e\neq 0$ can be deduced in a similar method, and is left as exercise to interested reader.
\end{example}

\begin{example}
Recall from \cite{GutmanDas2004} that for a star $S_n=K_{1,n-1},$ a path $P_n,$ and a tree $T_n$ (different from the star or path) of order $n \geq 5,$ the inequality holds:
\[
M_1(P_n) < M_1(T_n) < M_1(S_n).
\]
It is straightforward to observe that these three graphs have $m=n-1,$ and $n_{\triangle_1}=n_{\triangle_2}=n_{\triangle_3}=n_\square = n_\boxtimes =0.$  By Example~\ref{expKmn}, for parts $(A,B)$ with $|A|=1$ and $|B|=n-1,$ we have
\begin{align*}
w_4^{cl}((S_n)_S) 
&=\begin{cases}
2(n-1)^2 +5\sigma_B, & \sigma_A=0, \\
2(n-1)^2 +9\sigma_B + 4n-3, & \sigma_A=1.
\end{cases}
\end{align*}
When $\sigma=0,$ by the formula in Example~\ref{expPn}, we obtain the bound for $n\geq 5:$
\[
2(3n-5) < w_4^{cl}(T_n) < 2(n-1)^2.
\]
\end{example}

\begin{example}
Let $G_S=(C_n)_S$ be the cycle graph of order $n \geq 3$ with $\sigma$ loops. It is clear that $2(M_1-m)=6n$ and $6\sum_{v\in S}d_G(v)=12\sigma.$

\underline{Case 1}: Let $n\geq 5.$ Then, $n_{\triangle_1}=n_{\triangle_2}=n_{\triangle_3}=n_{\square} =n_{\boxtimes}=0.$  Let $n_{P'_k}$ be the number of paths of adjacent $k$-loops, and $\sigma_{na}$ be the number of non-adjacent loops, then 
\[
w_4^{cl}((C_n)_S) = 6n + 13\sigma - 4\left( n_{P'_k} + \sigma_{na} \right), \quad n\geq 5.
\]

\underline{Case 2}: Let $n=4.$ Then, $n_\square=1,$ and we get
\[
w_4^{cl}((C_4)_S) = 32 + 13\sigma - 4\left(n_{P'_k} + \sigma_{na} \right).
\]

\underline{Case 3}: Let $n=3.$ It is straightforward to obtain $w_4^{cl}((C_3)_S)=35 \:(\text{with }n_{P'_k}=0, \sigma_{na}=1, n_{\triangle_1}=1),$ $56 \:(\text{with }n_{P'_k}=1, \sigma_{na}=0, n_{\triangle_2}=1),$ and $81 \:(\text{with } n_{P'_k}=0, \sigma_{na}=0, n_{\triangle_3}=1)$ for $\sigma=1,2,$ and $3$ respectively.
\end{example}

\begin{example}
Let $G=K_n, n\geq 4.$ Since $K_n$ is regular, we have
\[
2(M_1(G)-m)=2\left(n(n-1)^2 - \frac{n(n-1)}{2}\right) = 2n(n-1)\left(n-\frac{3}{2}\right),\quad n\geq 4.
\]
It suffices to determine the number of 4-cycles $C_4$ in $K_n.$ Consider any $C_4$ with vertices $v_i,v_j,v_k,v_l.$ Without loss of generality, consider a closed 4 walk $ijkli.$ There are $(4-1)!/2$ ways to permute $v_j,v_k,$ and $v_l.$ 
Thus, we have 
\[
\frac{(4-1)!}{2}\begin{pmatrix}
    n \\ 4
\end{pmatrix} = \frac{n!}{8(n-4)!} 
\] many distinct $C_4$'s in $K_n.$ In total, we have
\[
w_4^{cl}(K_n) = 2n(n-1)\left(n-\frac{3}{2}\right)+\frac{n!}{(n-4)!}, \quad n \geq 4.
\] 
\end{example}


\section{Energy of self-loop graphs and twisted moments}
\label{sect3}

As it is known that the $k$-th spectral moment $\mathsf{M}_k(G_S)=\mathsf{M}_k(A(G_S))$ associated to a self-loop graph $G_S$ coincides with the number of closed $k$-walks on $G_S,$ i.e., $\mathsf{M}_k(G_S)=w^{cl}_k(G_S).$ The main goal of this section is to investigate an extension to some moment-like quantities \textit{twisted} by $\mathsf{M}_k(G_S)$. For brevity, we shall call these quantities \textit{twisted moments}. 

\begin{definition}\label{def3.1}
Let $B$ be an $n \times n$ real symmetric matrix and $\lambda_1(B) \geq \lambda_2(B) \geq \cdots \geq \lambda_n(B)$ be its eigenvalues. For $q \in \R, k \in \N,$ define the \textit{$\mathsf{M}_k(B)$-twisted moment} of $B,$ denoted by $\cM^k_q(B),$ as 
\[
\cM^k_q(B) = \sum^n_{i=1} \left|\lambda_i(B) - \frac{\mathsf{M}_k(B)}{n} \right|^q \:\: \in \R.  
\]
\end{definition}

\begin{remark}
Henceforth, we shall consider $B$ in Definition~\ref{def3.1} as the adjacency matrix $A(G_S)$ of a self-loop graph. When $k=1,$ the twisting is 
\[
\mathsf{M}_1(G_S)=\tr(A(G_S))=\sigma.
\]
The $\mathsf{M}_1(G_S)$-twisted moment of $G_S$ is
\begin{equation}
\label{eq:MqGS}
\cM_q(G_S) := \cM^1_q(A(G_S))= \sum^n_{i=1} \left|\lambda_i(G_S) - \frac{\sigma}{n} \right|^q.    
\end{equation}
\end{remark}
In the following, we derive several formulae of  $\cM_q(G_S)$ for $q=0,1,2,3,4,$ which may be of independent interest. The first three are straightforward: 
\begin{align}
    \cM_0(G_S)&=n, \label{eq:M0eq}\\
    \cM_1(G_S)&=\cE(G_S), \label{eq:M1eq}\\
    \cM_2(G_S)&= 2m + \sigma - \frac{\sigma^2}{n} 
    = w_2(G_S) - \frac{\sigma^2}{n}.
    \label{eq:M2eq}
\end{align}

\begin{proposition}
\label{M34formula}
Let $G_S$ be a self-loop graph of order $n$ and $|S|=\sigma.$ Let $j \in \N$ be such that $\lambda_1(G_S) \geq \lambda_2(G_S) \geq \cdots \geq \lambda_j(G_S) \geq \frac{\sigma}{n}.$ Then,
    \begin{align}
    \cM_3(G_S)&= 2\sum^j_{i=1} \lambda_i^3 - \frac{6\sigma}{n} \sum^j_{i=1} \lambda_i^2 + \frac{4\sigma^2}{n^2} \sum^j_{i=1} \lambda_i  - w_3^{cl}(G_S)  + \frac{3\sigma}{n} w_2^{cl}(G_S) - \frac{2\sigma^3}{n^2} + \frac{\sigma^2}{n^2} \cE(G_S), \label{eq:M3eq}\\
    \cM_4(G_S)&= w_4^{cl}(G_S) - \frac{4\sigma}{n} w_3^{cl}(G_S) + \frac{6 \sigma^2}{n^2} w_2^{cl}(G_S) - \frac{3\sigma^4}{n^3}, \label{eq:M4eq}
    \end{align}    
where $w_2^{cl}(G_S), w^{cl}_3(G_S),$ and $w_4^{cl}(G_S)$ denote the number of closed 2-,3-, and 4-walks on $G_S,$ respectively, as obtained in the previous section.
\end{proposition}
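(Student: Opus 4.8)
The plan is to expand each absolute-value expression binomially and then recognize the resulting power sums $\sum_{i=1}^n \lambda_i^r$ as spectral moments, which by the identification $\mathsf{M}_r(G_S)=w^{cl}_r(G_S)$ recalled at the start of Section~\ref{sect2} (with $\mathsf{M}_1(G_S)=\sigma$) are exactly the closed-walk counts $w_2^{cl}(G_S),w_3^{cl}(G_S),w_4^{cl}(G_S)$ computed earlier. Everything then reduces to collecting terms.

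For $\cM_4(G_S)$ this is immediate: the exponent $4$ is even, so $\left|\lambda_i-\tfrac{\sigma}{n}\right|^4=\left(\lambda_i-\tfrac{\sigma}{n}\right)^4$, and I would write
\[
\cM_4(G_S)=\sum_{i=1}^n\left(\lambda_i^4-\tfrac{4\sigma}{n}\lambda_i^3+\tfrac{6\sigma^2}{n^2}\lambda_i^2-\tfrac{4\sigma^3}{n^3}\lambda_i+\tfrac{\sigma^4}{n^4}\right),
\]
sum term by term to obtain $w_4^{cl}(G_S)-\tfrac{4\sigma}{n}w_3^{cl}(G_S)+\tfrac{6\sigma^2}{n^2}w_2^{cl}(G_S)-\tfrac{4\sigma^3}{n^3}\cdot\sigma+\tfrac{\sigma^4}{n^4}\cdot n$, and then merge the last two contributions into $-\tfrac{3\sigma^4}{n^3}$. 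This gives \eqref{eq:M4eq}.

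The case $\cM_3(G_S)$ needs more care since the odd exponent does not absorb the absolute value. I would use the splitting index $j$ from the statement ($\lambda_j\geq\tfrac{\sigma}{n}>\lambda_{j+1}$) to write
\[
\cM_3(G_S)=\sum_{i=1}^j\left(\lambda_i-\tfrac{\sigma}{n}\right)^3-\sum_{i=j+1}^n\left(\lambda_i-\tfrac{\sigma}{n}\right)^3=2\sum_{i=1}^j\left(\lambda_i-\tfrac{\sigma}{n}\right)^3-\sum_{i=1}^n\left(\lambda_i-\tfrac{\sigma}{n}\right)^3.
\]
The full sum expands and evaluates, exactly as above, to $w_3^{cl}(G_S)-\tfrac{3\sigma}{n}w_2^{cl}(G_S)+\tfrac{2\sigma^3}{n^2}$. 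Expanding the partial sum $2\sum_{i=1}^j(\lambda_i-\tfrac{\sigma}{n})^3$ binomially then produces $2\sum_{i=1}^j\lambda_i^3-\tfrac{6\sigma}{n}\sum_{i=1}^j\lambda_i^2+\tfrac{6\sigma^2}{n^2}\sum_{i=1}^j\lambda_i-\tfrac{2j\sigma^3}{n^3}$, which still contains the graph-dependent index $j$ both in the constant $-\tfrac{2j\sigma^3}{n^3}$ and implicitly in the linear sum.

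The only genuine subtlety — and hence the main obstacle — is making this residual $j$-dependence cancel. For that I would invoke the trace identity $\sum_{i=1}^n\left(\lambda_i-\tfrac{\sigma}{n}\right)=\sigma-\sigma=0$, which forces $\sum_{i=1}^j(\lambda_i-\tfrac{\sigma}{n})=-\sum_{i=j+1}^n(\lambda_i-\tfrac{\sigma}{n})$ and therefore, by the definition $\cE(G_S)=\cM_1(G_S)$, yields $\cE(G_S)=2\sum_{i=1}^j(\lambda_i-\tfrac{\sigma}{n})$, i.e. $\sum_{i=1}^j\lambda_i=\tfrac12\cE(G_S)+\tfrac{j\sigma}{n}$. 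Splitting the coefficient $6$ of the term $\tfrac{\sigma^2}{n^2}\sum_{i=1}^j\lambda_i$ as $4+2$ and substituting this identity into the $2$-part replaces $\tfrac{2\sigma^2}{n^2}\sum_{i=1}^j\lambda_i$ by $\tfrac{\sigma^2}{n^2}\cE(G_S)+\tfrac{2j\sigma^3}{n^3}$; the new $+\tfrac{2j\sigma^3}{n^3}$ cancels the $-\tfrac{2j\sigma^3}{n^3}$ coming from the binomial expansion, so no $j$ survives. Collecting the surviving terms with $-\big(w_3^{cl}(G_S)-\tfrac{3\sigma}{n}w_2^{cl}(G_S)+\tfrac{2\sigma^3}{n^2}\big)$ gives precisely \eqref{eq:M3eq}. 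Apart from this cancellation the argument is routine bookkeeping, so I would present the two binomial expansions compactly and highlight only the trace identity and the $4+2$ split as the substantive steps.
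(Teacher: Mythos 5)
Your proof is correct, and it follows the paper's strategy in all essentials: binomial expansion, splitting the spectrum at the index $j$ where $\lambda_j\geq \tfrac{\sigma}{n}>\lambda_{j+1}$, and converting full power sums into $w_2^{cl},w_3^{cl},w_4^{cl}$ and $\sigma$ via $\mathsf{M}_r(G_S)=w_r^{cl}(G_S)$; your $\cM_4$ computation is literally the paper's. The only divergence is in how the odd case is organized: the paper factors $\left|\lambda_i-\tfrac{\sigma}{n}\right|^3=\left(\lambda_i-\tfrac{\sigma}{n}\right)^2\left|\lambda_i-\tfrac{\sigma}{n}\right|$ and evaluates the three sums $\sum_i\lambda_i^2\left|\lambda_i-\tfrac{\sigma}{n}\right|$, $\sum_i\lambda_i\left|\lambda_i-\tfrac{\sigma}{n}\right|$, $\sum_i\left|\lambda_i-\tfrac{\sigma}{n}\right|$ separately, so the term $\tfrac{\sigma^2}{n^2}\cE(G_S)$ appears directly from the last sum; you instead write $\cM_3=2\sum_{i\leq j}\left(\lambda_i-\tfrac{\sigma}{n}\right)^3-\sum_{i=1}^n\left(\lambda_i-\tfrac{\sigma}{n}\right)^3$ and recover the energy term through the identity $\cE(G_S)=2\sum_{i\leq j}\left(\lambda_i-\tfrac{\sigma}{n}\right)$ (a consequence of $\sum_i\lambda_i=\sigma$), which also kills the explicit $j$-dependent constant. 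Both uses of the zero-sum trace identity are equivalent; your version is arguably a slightly cleaner bookkeeping of the same computation, while the paper's factorization avoids the $4+2$ coefficient split by never generating the $-\tfrac{2j\sigma^3}{n^3}$ term in the first place. No gap either way.
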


\vspace{-1em}

\begin{proof}
We first derive the latter. By Binomial Theorem,
\begin{align*}
\cM_4(G_S)
&= \sum^n_{i=1} \left|\lambda_i(G_S) - \frac{\sigma}{n} \right|^4 \\
&= \sum^n_{i=1} \left( \lambda_i^4(G_S) - \frac{4\sigma}{n}\lambda_i^3(G_S) +  \frac{6\sigma^2}{n^2}\lambda_i^2(G_S) - \frac{4\sigma^3}{n^3}\lambda_i(G_S) + \frac{\sigma^4}{n^4} \right) \\
&= \sum^n_{i=1} \lambda_i^4(G_S) - \frac{4\sigma}{n}\sum^n_{i=1}\lambda_i^3(G_S) +  \frac{6\sigma^2}{n^2}\sum^n_{i=1}\lambda_i^2(G_S) - \frac{4\sigma^3}{n^3}\sum^n_{i=1}\lambda_i(G_S) + \frac{\sigma^4}{n^4}\sum^n_{i=1} 1 \\
&= w_4^{cl}(G_S) - \frac{4\sigma}{n}w_3^{cl}(G_S) +  \frac{6\sigma^2}{n^2}w_2^{cl}(G_S) - \frac{3\sigma^4}{n^3}. 
\end{align*}
We shall now derive the formula for $\cM_3(G_S)$. For simplicity, we write $\lambda_i=\lambda_i(G_S).$  Let $j \in \N$ be such that $\lambda_1 \geq \lambda_2 \geq \cdots \geq \lambda_j \geq \frac{\sigma}{n}$ and $\frac{\sigma}{n}>\lambda_{j+1} \geq \cdots \lambda_n.$ Then, 
\[
\lambda^2_i\left| \lambda_i - \frac{\sigma}{n}\right|= 
\begin{cases}
  \lambda_i^3 - \lambda_i^2\dfrac{\sigma}{n}, & i=1,\ldots,j \\
  \lambda_i^2\dfrac{\sigma}{n} - \lambda_i^3, & i=j+1,\ldots, n.
\end{cases}
\] 
Since 
\[
-\sum^n_{i=j+1} \lambda_i^3 = \sum^j_{i=1}\lambda_i^3 - w_3(G_S) \;\;\text{ and }\;\;
-\dfrac{\sigma}{n} \left(\sum^j_{i=1} \lambda_i^2 - \sum^n_{i=j+1}\lambda_i^2\right) = 
- \dfrac{2\sigma}{n}\sum^j_{i=1} \lambda_i^2 + \frac{\sigma}{n}w_2^{cl}(G_S),
\]
we obtain
\begin{align*}
\sum^n_{i=1}\lambda^2_i\left| \lambda_i - \frac{\sigma}{n}\right|
&=  \left( \sum^j_{i=1} \lambda_i^3 - \dfrac{\sigma}{n}\sum^j_{i=1}\lambda_i^2 \right) + \left( \dfrac{\sigma}{n}\sum^n_{i=j+1}\lambda_i^2 - \sum^n_{i=j+1} \lambda_i^3  \right) \\
&= 2\sum^j_{i=1} \lambda_i^3 - w_3^{cl}(G_S)  + \frac{\sigma}{n}w_2^{cl}(G_S) - \frac{2\sigma}{n} \sum^j_{i=1} \lambda_i^2 . 
\end{align*}
Using similar method, we deduce that 
\[
\sum^n_{i=1}\lambda_i\left| \lambda_i - \frac{\sigma}{n}\right|
= 2\sum^j_{i=1} \lambda_i^2 - \frac{2\sigma}{n} \sum^j_{i=1} \lambda_i  - w_2^{cl}(G_S) + \frac{\sigma^2}{n}.
\]
Since $\cE(G_S)= \sum^n_{i=1}|\lambda_i - \frac{\sigma}{n}|,$ we obtain
\begin{align*}
\cM_3(G_S)
&= \sum^n_{i=1}\left(\lambda^2_i -\frac{2\sigma}{n} \lambda_i +\frac{\sigma^2}{n^2}\right) \left| \lambda_i - \frac{\sigma}{n}\right| \\
&=\sum^n_{i=1}\lambda^2_i\left| \lambda_i - \frac{\sigma}{n}\right| -\frac{2\sigma}{n} \sum^n_{i=1}\lambda_i\left| \lambda_i - \frac{\sigma}{n}\right|  + \frac{\sigma^2}{n^2} \sum^n_{i=1}\left| \lambda_i - \frac{\sigma}{n}\right| \\
&= 2\sum^j_{i=1} \lambda_i^3 - \frac{6\sigma}{n} \sum^j_{i=1} \lambda_i^2 + \frac{4\sigma^2}{n^2} \sum^j_{i=1} \lambda_i  - w_3^{cl}(G_S) + \frac{3\sigma}{n} w_2^{cl}(G_S)  - \frac{2\sigma^3}{n^2} + \frac{\sigma^2}{n^2} \cE(G_S) .
\end{align*}
\end{proof}

\vspace{-0.2em}
\begin{theorem}
\label{mainInq}
Let $G_S$ be a connected self-loop graph with $|S|=\sigma \geq 1$.  Let $p, q \in \R$ with $p \leq q.$ Then, 
\begin{equation}
\cM_q(G_S)^2 \leq \cM_{2q -2p}(G_S) \cM_{2p}(G_S).
\end{equation}
\end{theorem}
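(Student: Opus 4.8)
The plan is to read Theorem~\ref{mainInq} as a Lyapunov-type (log-convexity) inequality for the one-parameter family $q \mapsto \cM_q(G_S)$ and to deduce it from a single application of the Cauchy--Schwarz inequality. Set $t_i := \bigl|\lambda_i(G_S) - \tfrac{\sigma}{n}\bigr| \geq 0$ for $i=1,\dots,n$, so that, by Definition~\ref{def3.1} and \eqref{eq:MqGS}, one has $\cM_r(G_S) = \sum_{i=1}^n t_i^{\,r}$ for every $r \in \R$. All powers that appear are powers of the nonnegative numbers $t_i$; moreover, since $G_S$ is connected and (we may assume) has order $n\ge 2$, it contains an edge, so $A(G_S)$ is not a scalar matrix, the $\lambda_i$ are not all equal to $\tfrac{\sigma}{n}$, and at least one $t_i$ is strictly positive. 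In particular each $\cM_r(G_S)$ is a genuinely positive quantity (with the convention $0^0 = 1$), which is the positivity asserted earlier in the section.

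The key step is the factorization $t_i^{\,q} = t_i^{\,q-p}\cdot t_i^{\,p}$, which is legitimate precisely because the hypothesis $p \le q$ forces $q-p \ge 0$, so the factor $t_i^{\,q-p}$ causes no difficulty even at an index with $t_i = 0$. Applying the Cauchy--Schwarz inequality to the vectors $\bigl(t_i^{\,q-p}\bigr)_{i=1}^n$ and $\bigl(t_i^{\,p}\bigr)_{i=1}^n$ in $\R^n$ gives
\[
\cM_q(G_S)^2 = \left( \sum_{i=1}^n t_i^{\,q-p}\, t_i^{\,p} \right)^{2} \le \left( \sum_{i=1}^n t_i^{\,2(q-p)} \right)\left( \sum_{i=1}^n t_i^{\,2p} \right) = \cM_{2q-2p}(G_S)\,\cM_{2p}(G_S),
\]
which is exactly the asserted inequality; equivalently it is the midpoint-convexity of $r \mapsto \log\cM_r(G_S)$ at $r = \tfrac12\bigl((2q-2p)+2p\bigr)=q$. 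The ratio chain from the introduction then follows by specializing to $p=\tfrac{k-1}{2}$ and $q=k$ (so $2p = k-1 \ge 0$ and $2q-2p = k+1$): this yields $\cM_k(G_S)^2 \le \cM_{k-1}(G_S)\,\cM_{k+1}(G_S)$, i.e.\ $\cM_k/\cM_{k-1} \le \cM_{k+1}/\cM_k$ for each $k \ge 1$.

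The argument is essentially automatic, and the only point requiring care is the degenerate situation in which $\tfrac{\sigma}{n}$ is itself an eigenvalue of $A(G_S)$ — this can happen, e.g.\ for $\widehat{C_4}$, whose spectrum is $\{3,1,1,-1\}$ while $\tfrac{\sigma}{n}=1$ — so that some $t_i = 0$. If in addition $p<0$, then $t_i^{\,p}$, and hence $\cM_{2p}(G_S)$, has to be read as $+\infty$; in that regime the inequality is either vacuous (finite left-hand side against an infinite right-hand side) or holds with both sides equal to $+\infty$, so it still holds under the natural conventions. For all the applications in this paper the relevant exponents $2p$ and $2q-2p$ are nonnegative, so this subtlety never intervenes and the Cauchy--Schwarz step displayed above is all that is needed.
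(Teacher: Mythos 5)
Your proof is correct and follows essentially the same route as the paper: factor $\bigl|\lambda_i-\tfrac{\sigma}{n}\bigr|^{q}$ as $\bigl|\lambda_i-\tfrac{\sigma}{n}\bigr|^{q-p}\bigl|\lambda_i-\tfrac{\sigma}{n}\bigr|^{p}$ and apply Cauchy--Schwarz, with a separate remark for indices where $\lambda_i=\tfrac{\sigma}{n}$. Your aside on the convention needed when $p<0$ and some $t_i=0$ is a slightly more careful treatment of that degenerate case than the paper gives, but the core argument is identical.
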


\begin{proof} 
Suppose $\lambda_i \neq \frac{\sigma}{n}$ for all $i=1,...,n.$ Then, $|\lambda_i- \frac{\sigma}{n}|\neq 0$ for all $i=1,...,n.$ 

\noindent By the Cauchy-Schwarz inequality,
\begin{align*}
\cM_q(G_S)
&= \sum^n_{i=1} \left| \lambda_i-\frac{\sigma}{n}\right|^{q-p} \left| \lambda_i-\frac{\sigma}{n}\right|^{p}
&\leq \left[ \left(\sum^n_{i=1} \left| \lambda_i-\frac{\sigma}{n}\right|^{2q-2p}\right) \left( \sum^n_{i=1}\left| \lambda_i-\frac{\sigma}{n}\right|^{2p}\right) \right]^{\frac12}.   
\end{align*}
If there exist $j_k\in \{1,...,n\}$ such that $\lambda_{j_k} =\frac{\sigma}{n},$ then $|\lambda_{j_k} -\frac{\sigma}{n}|=0$ for all such $j_k$'s.  Thus,
\begin{align*}
\cM_q(G_S)
&= \sum^n_{i=1, i \neq j_k} \left| \lambda_i-\frac{\sigma}{n}\right|^{q-p} \left| \lambda_i-\frac{\sigma}{n}\right|^{p} \\
&\leq \left[ \left(\sum^n_{i=1, i \neq j_k} \left| \lambda_i-\frac{\sigma}{n}\right|^{2q-2p}\right) \left( \sum^n_{i=1, i \neq j_k}\left| \lambda_i-\frac{\sigma}{n}\right|^{2p}\right) \right]^{\frac12} \\
&= \left[ \left(\sum^n_{i=1} \left| \lambda_i-\frac{\sigma}{n}\right|^{2q-2p}\right) \left( \sum^n_{i=1}\left| \lambda_i-\frac{\sigma}{n}\right|^{2p}\right) \right]^{\frac12} = \left(\cM_{2q-2p}(G_S)\cM_{2p}(G_S)\right)^{\frac{1}{2}},
\end{align*}
where the first equality in the third line follows by adding $j_k$ many $0^{2r}=0$ (with $r=q-p$) in each summation, and this expansion does not affect the product.
\end{proof}

Observe that by Theorem~\ref{mainInq}  with $q=1, p=1,$ yields the McClelland-type bound for $G_S$ \cite{gutman2021energy}: 
\[
\cE(G_S) \leq \sqrt{n\left(2m+\sigma - \frac{\sigma^2}{n} \right)}. 
\]
Moreover, it also follows immediately from Theorem~\ref{mainInq} that 
\begin{equation}
\label{eq:IneqMk1}
\frac{\cM_q(G_S)^2}{n} \leq \cM_{2q}(G_S),    
\end{equation}    
\begin{equation}
\label{eq:IneqMk2}
\frac{\cM_q(G_S)^4}{n^3} \leq \cM_{4q}(G_S).    
\end{equation}    
Next, we establish a positivity result about the twisted moment $\cM_i.$
\begin{theorem}
\label{M2}
Let $G_S$ be a connected self-loop graph of order $n\geq 2,$ size $m \geq 1,$ and $|S|=\sigma$ where $0 \leq \sigma \leq n.$  Then, $\cM_i(G_S)>0$ for all $i\in \N \cup \{0\}.$
\end{theorem}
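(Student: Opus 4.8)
The plan is to show $\cM_i(G_S)>0$ by ruling out the only way it could vanish: namely, all $n$ eigenvalues being equal to $\sigma/n$. First I would observe that $\cM_i(G_S)=\sum_{k=1}^n |\lambda_k - \sigma/n|^i$ is a sum of nonnegative terms, so it is zero if and only if every $\lambda_k$ equals $\sigma/n$. The cases $i=0$ and the ones computed explicitly in \eqref{eq:M0eq}--\eqref{eq:M2eq} can be disposed of immediately: $\cM_0(G_S)=n\geq 2>0$, and $\cM_2(G_S)=2m+\sigma-\sigma^2/n$; since $m\geq 1$ and $\sigma\leq n$ give $\sigma-\sigma^2/n = \sigma(1-\sigma/n)\geq 0$, we get $\cM_2(G_S)\geq 2m>0$. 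So it suffices to treat $i\geq 1$ with $i\neq 2$ by the common spectral argument, though really one clean argument handles all $i\in\N\cup\{0\}$ at once.

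The heart of the matter is: \emph{the adjacency matrix $A(G_S)$ of a connected self-loop graph on $n\geq 2$ vertices with at least one edge cannot be a scalar multiple of the identity}. Suppose for contradiction $A(G_S)=cI$ for some scalar $c$; comparing with the definition of $A(G_S)$ (entries $0$ or $1$, with $1$ on the diagonal exactly at looped vertices), the off-diagonal entries must all be $0$, contradicting $m\geq 1$ (there is an edge $v_iv_j$ with $i\neq j$, forcing $a_{ij}=1\neq 0$). Hence $A(G_S)$ is not scalar, so its eigenvalues are not all equal; in particular they cannot all equal $\sigma/n$. Therefore at least one term $|\lambda_k-\sigma/n|^i$ is strictly positive (here I use $i\geq 1$ so the $i$-th power of a positive number is positive; for $i=0$ use $\cM_0=n$ directly), giving $\cM_i(G_S)>0$.

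I would present it as follows: fix $i\in\N\cup\{0\}$. If $i=0$, then $\cM_0(G_S)=n\geq 2>0$ by \eqref{eq:M0eq}. If $i\geq 1$, note $\cM_i(G_S)=0$ would force $\lambda_k=\sigma/n$ for all $k$, whence $A(G_S)=\frac{\sigma}{n}I$ (a real symmetric matrix with all eigenvalues equal is scalar), contradicting the existence of an edge since $a_{ij}=1$ for some $i\neq j$ while $\frac{\sigma}{n}I$ has zero off-diagonal entries. Thus $\cM_i(G_S)>0$.

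The main obstacle is essentially only bookkeeping: making sure the degenerate boundary cases ($\sigma=0$, $\sigma=n$, small $n$) are covered, but the hypotheses $n\geq 2$ and $m\geq 1$ make the edge argument uniform, so there is no real difficulty. One should also note where connectedness is (barely) used — it guarantees $m\geq 1$ when $n\geq 2$ — but since $m\geq 1$ is assumed outright, the argument goes through cleanly regardless.
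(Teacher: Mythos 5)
Your proof is correct, but it follows a genuinely different route from the paper. The paper bootstraps: it verifies positivity of $\cM_0,\cM_1,\cM_2$ directly (with $\cM_2=2m+\sigma-\sigma^2/n$ handled by the same $\sigma(1-\sigma/n)\geq 0$ observation you make), and then obtains $\cM_i>0$ for $i\geq 3$ by induction from the Cauchy--Schwarz consequence $\cM_i\geq \cM_{i-1}^2/\cM_{i-2}$ of Theorem~\ref{mainInq}, treating the loopless case $\sigma=0$ separately since that theorem is stated for $\sigma\geq 1$. You instead argue at the level of the spectrum: if $\cM_i(G_S)=0$ for some $i\geq 1$, every eigenvalue equals $\sigma/n$, and since $A(G_S)$ is real symmetric this forces $A(G_S)=\frac{\sigma}{n}I$, contradicting $m\geq 1$; the case $i=0$ is $\cM_0=n$. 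Your argument is more elementary and self-contained -- it does not invoke the ratio inequality at all, needs no case split between $\sigma=0$ and $\sigma\geq 1$, and in fact proves positivity for every real exponent $q>0$, not just integer $i$ -- whereas the paper's route has the side benefit of setting up exactly the estimates ($\cM_i\geq \cM_{i-1}^2/\cM_{i-2}$) that feed into the ratio chain of Theorem~\ref{mainInq2} and the subsequent energy bounds. One could even argue your proof is preferable logically, since Theorem~\ref{mainInq2} cites the positivity result to guarantee its denominators are nonzero, so deriving positivity independently of the inequality machinery removes any appearance of circularity.
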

\begin{proof}
For simplicity, we write $\cM_i=\cM_i(G_S).$
The first two $\cM_0=n$ and $\cM_1=\cE(G_S)$ are clear. Note that $\cM_2=0$ if and only if $m=0$ and $\sigma=0,$ i.e. $G_S$ is an edgeless and loopless graph  $\overline{K_n}$. On the other hand, for the case $2m+\sigma < \frac{\sigma^2}{n}$ to hold, $\sigma$ needs to be maximized. When $\sigma=n,$ we get $2m<0,$ a contradiction. Thus, $\cM_2>0$ for any connected self-loop graphs.  Now, by taking  $q=2, p=3/2$ in Theorem~\ref{mainInq}, we have 
\[
\cM_3 \geq \frac{\cM_2^2}{\cM_1} \geq \sqrt{\frac{\cM_2^3}{n}}>0.
\]
By taking $q=3, p=2$ in Theorem~\ref{mainInq}, it is then clear that $\cM_4\geq \cM^2_3/\cM_2 >0.$ For $i \in \N, i\geq 5,$  by induction we deduce that 
\[
\cM_i \geq \frac{\cM^2_{i-1}}{\cM_{i-2}} >0. 
\]
For the loopless case, it suffices to
consider $\cM_3(G)= (\sum^j_{i=1} 2\lambda_i^3(G)) - w_3^{cl}(G),$ where $j \in \N$ such that $\lambda_j \geq 0.$ Even when $w_3^{cl}(G)=0,$ we have $\cM_3(G)\neq 0$ by the connectivity of $G.$  
\end{proof}

Actually, the previous proof leads to a nice ratio property. Consider any connected $G_S$ with $|S|=\sigma \geq 0.$ 
Let $k\in \N.$ Write $\cM_{q}=\cM_q(G_S).$ When $q=2k-1, p=k,$ we have $\dfrac{\cM_{2k-1}}{\cM_{2k-2}} \leq \dfrac{\cM_{2k}}{\cM_{2k-1}}.$ When $q= 2k, p=k+\frac12,$ then $\dfrac{\cM_{2k}}{\cM_{2k-1}} \leq \dfrac{\cM_{2k+1}}{\cM_{2k}}.$ By Theorem~\ref{M2}, all such fractions are well-defined. Thus, combining these two cases, we have the following result.

\begin{theorem}
\label{mainInq2}
Let $G_S$ be a connected self-loop graph with $|S|=\sigma,$ where $0 \leq \sigma \leq n.$ 
Then,
\begin{equation}
\label{eq:ratio}
\frac{\cM_{1}(G_S)}{\cM_{0}(G_S)} \leq \frac{\cM_{2}(G_S)}{\cM_{1}(G_S)} \leq 
\frac{\cM_{3}(G_S)}{\cM_{2}(G_S)} \leq \frac{\cM_{4}(G_S)}{\cM_{3}(G_S)} \leq \cdots \leq \frac{\cM_{n}(G_S)}{\cM_{n-1}(G_S)} \leq \cdots.  
\end{equation}
\end{theorem}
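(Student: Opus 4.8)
The plan is to obtain the chain \eqref{eq:ratio} as a direct consequence of Theorem~\ref{mainInq} together with the strict positivity guaranteed by Theorem~\ref{M2}, exactly as sketched in the paragraph preceding the statement. The key observation is that the Cauchy--Schwarz inequality $\cM_q^2 \le \cM_{2q-2p}\,\cM_{2p}$ becomes, after choosing $p$ to be the midpoint of two consecutive indices, precisely the statement that consecutive ratios are nondecreasing. Concretely, I would split into two families of instances:

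\textbf{Odd step.} Fix $k \in \N$ and apply Theorem~\ref{mainInq} with $q = 2k-1$ and $p = k$. Since $2q - 2p = 2(2k-1) - 2k = 2k-2$ and $2p = 2k$, this gives $\cM_{2k-1}^2 \le \cM_{2k-2}\,\cM_{2k}$. By Theorem~\ref{M2} every $\cM_i > 0$, so all the quantities appearing are strictly positive and we may divide by $\cM_{2k-2}\,\cM_{2k-1}$ to conclude
\[
\frac{\cM_{2k-1}}{\cM_{2k-2}} \le \frac{\cM_{2k}}{\cM_{2k-1}}.
\]

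\textbf{Even step.} Fix $k \in \N$ and apply Theorem~\ref{mainInq} with $q = 2k$ and $p = k + \tfrac12$; note $p \le q$ holds since $k+\tfrac12 \le 2k$ for $k \ge 1$. Then $2q - 2p = 4k - (2k+1) = 2k-1$ and $2p = 2k+1$, so $\cM_{2k}^2 \le \cM_{2k-1}\,\cM_{2k+1}$, and dividing by $\cM_{2k-1}\,\cM_{2k}$ (again legitimate by Theorem~\ref{M2}) yields
\[
\frac{\cM_{2k}}{\cM_{2k-1}} \le \frac{\cM_{2k+1}}{\cM_{2k}}.
\]
Interleaving these two inequalities over all $k \ge 1$ produces the infinite chain \eqref{eq:ratio}, with the first link $\cM_1/\cM_0 \le \cM_2/\cM_1$ being the case $k=1$ of the odd step (using $\cM_0 = n$).

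I do not anticipate a genuine obstacle here: the whole argument is a bookkeeping exercise in which index to feed into the already-proved Cauchy--Schwarz inequality, plus an appeal to positivity so that the divisions are valid. The only point requiring a moment of care is checking that the parameters $(q,p)$ chosen in each step satisfy the hypothesis $p \le q$ of Theorem~\ref{mainInq} and that $2q-2p, 2p \ge 0$ so that the subscripts are legitimate nonnegative indices --- this is where the restriction to $k \ge 1$ enters. One should also note explicitly that Theorem~\ref{mainInq} is stated for $\sigma \ge 1$ while the present statement allows $\sigma = 0$; for $\sigma = 0$ the twisting is trivial, $\cM_q(G) = \sum_i |\lambda_i|^q$, and the identical Cauchy--Schwarz computation in the proof of Theorem~\ref{mainInq} goes through verbatim (the $\sigma \ge 1$ hypothesis there is not actually used in that computation), while Theorem~\ref{M2} already handles the loopless positivity via its final paragraph, so the chain holds in that case too.
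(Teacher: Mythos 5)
Your proposal is correct and is essentially the paper's own argument: the paper derives the chain by exactly the two instances of Theorem~\ref{mainInq} you use ($q=2k-1,\,p=k$ and $q=2k,\,p=k+\tfrac12$), with Theorem~\ref{M2} guaranteeing the fractions are well-defined. Your explicit remark that the $\sigma\geq 1$ hypothesis of Theorem~\ref{mainInq} is not actually used in its Cauchy--Schwarz computation is a point the paper passes over silently (it simply asserts the claim for $\sigma\geq 0$), but it does not change the route.
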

\vspace{1em}

\begin{corollary}
\label{lb1}
Let $G_S$ be a connected self-loop graph with $|S|=\sigma,$ where $0 \leq \sigma \leq n.$ Then,
\begin{equation}
\label{eq:lb1}
\cE(G_S) \geq \sqrt{\frac{\cM_2^3}{\cM_4}}.
\end{equation}
In particular, the equality holds if $G_S \cong (K_{a,b})_S$ when $\sigma=0$ and $\sigma=n.$  
\end{corollary}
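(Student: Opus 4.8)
The statement is exactly the "two-step Cauchy--Schwarz" consequence of Theorem~\ref{mainInq} (equivalently, of the ratio chain in Theorem~\ref{mainInq2}), so the first task is purely algebraic. Apply Theorem~\ref{mainInq} once with $q=2,\ p=\tfrac32$ to get $\cM_2(G_S)^2 \le \cM_1(G_S)\,\cM_3(G_S)$, and once with $q=3,\ p=2$ to get $\cM_3(G_S)^2 \le \cM_2(G_S)\,\cM_4(G_S)$. (These are precisely the two middle links $\cM_2/\cM_1 \le \cM_3/\cM_2$ and $\cM_3/\cM_2 \le \cM_4/\cM_3$ of \eqref{eq:ratio}.) By Theorem~\ref{M2} every $\cM_i(G_S)$ is strictly positive, so all the divisions below are legitimate. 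From the first inequality, $\cM_3(G_S) \ge \cM_2(G_S)^2/\cM_1(G_S) > 0$; squaring and feeding this into the second inequality gives
\[
\frac{\cM_2(G_S)^4}{\cM_1(G_S)^2} \;\le\; \cM_3(G_S)^2 \;\le\; \cM_2(G_S)\,\cM_4(G_S),
\]
hence $\cM_2(G_S)^3 \le \cM_1(G_S)^2\,\cM_4(G_S)$. Since $\cM_1(G_S)=\cE(G_S)$ by \eqref{eq:M1eq} and $\cM_4(G_S)>0$, this rearranges to $\cE(G_S) \ge \sqrt{\cM_2^3/\cM_4}$, which is \eqref{eq:lb1}.

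\textbf{Equality case.} Here I would simply compute the relevant spectra. For $\sigma=0$ we have $G_S=K_{a,b}$, whose adjacency spectrum is $\{\sqrt{ab},\,-\sqrt{ab},\,0^{(n-2)}\}$ with $n=a+b$; since $\sigma/n=0$, formula \eqref{eq:MqGS} gives $\cM_1=\cE(K_{a,b})=2\sqrt{ab}$, $\cM_2=2ab$, and $\cM_4=2(ab)^2$, so $\sqrt{\cM_2^3/\cM_4}=\sqrt{(2ab)^3/(2(ab)^2)}=2\sqrt{ab}=\cE(K_{a,b})$. For $\sigma=n$ we have $G_S=\widehat{K_{a,b}}$ with $A(\widehat{K_{a,b}})=A(K_{a,b})+I$, so its eigenvalues are $\{1+\sqrt{ab},\,1-\sqrt{ab},\,1^{(n-2)}\}$; but now $\sigma/n=1$, so the multiset $\{\lambda_i(G_S)-\sigma/n\}$ is again $\{\sqrt{ab},\,-\sqrt{ab},\,0^{(n-2)}\}$, whence $\cM_1,\cM_2,\cM_4$ (and the conclusion) are identical to the $\sigma=0$ case. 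In both cases equality in \eqref{eq:lb1} holds. One may also note, as a sanity check, that these are exactly the configurations in which the two Cauchy--Schwarz steps are simultaneously tight: after discarding the indices with $\lambda_i=\sigma/n$ (as in the proof of Theorem~\ref{mainInq}), equality forces $|\lambda_i-\sigma/n|$ to be constant on its support, which indeed holds here since the nonzero values are all equal to $\sqrt{ab}$.

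\textbf{Anticipated difficulty.} There is no serious obstacle: the inequality is a formal corollary of Theorem~\ref{mainInq} together with the positivity of the $\cM_i$'s from Theorem~\ref{M2}, and the equality claim only asserts sufficiency, so no full characterization of the extremal graphs is needed. The one point requiring a little care is the bookkeeping in the equality discussion when some eigenvalues coincide with $\sigma/n$ — but this has already been handled in the proof of Theorem~\ref{mainInq}, so it suffices to invoke it. The mildly delicate computational point is correctly reading off the spectrum of $\widehat{K_{a,b}}$ (the uniform shift by $I$) and observing that the shift is cancelled by $\sigma/n=1$, which is what makes the $\sigma=0$ and $\sigma=n$ computations literally the same.
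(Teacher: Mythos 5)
Your proof is correct and follows essentially the same route as the paper: the inequality rests on the same two Cauchy--Schwarz/ratio-chain links ($\cM_2^2\le\cM_1\cM_3$ and $\cM_3^2\le\cM_2\cM_4$), which you combine by chaining $\cM_2^4/\cM_1^2\le\cM_3^2\le\cM_2\cM_4$ while the paper multiplies $\cM_1\ge\cM_2^2/\cM_3$ with $\cM_1\ge\cM_2\cM_3/\cM_4$ — the same two inequalities, arranged differently. The only (cosmetic) divergence is in the equality case, where you verify both $\sigma=0$ and $\sigma=n$ directly from the spectrum, using $A(\widehat{K_{a,b}})=A(K_{a,b})+I$ and the cancellation by $\sigma/n=1$, whereas the paper treats $\sigma=0$ as trivial and handles $\widehat{K_{a,b}}$ by computing $\cM_2$, $\cM_4$ from \eqref{eq:M2eq}, \eqref{eq:M4eq} and citing the known value $\cE(\widehat{K_{a,b}})=2\sqrt{ab}$.
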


\begin{proof}
By \eqref{eq:ratio}, from $\dfrac{\cM_2}{\cM_1} \leq \dfrac{\cM_3}{\cM_2}$ we have 
$
\cE(G_S)=\cM_1 \geq \dfrac{\cM_2^2}{\cM_3} > 0.
$
Similarly, from $\dfrac{\cM_2}{\cM_1} \leq \dfrac{\cM_4}{\cM_3}$ we have 
$
\cE(G_S)=\cM_1 \geq \dfrac{\cM_2\cM_3}{\cM_4} > 0.
$ 
It follows that 
\[
\cE(G_S)^2 \geq \dfrac{\cM_2^2}{\cM_3} \cdot \dfrac{\cM_2\cM_3}{\cM_4} = \dfrac{\cM_2^3}{\cM_4}. 
\]
For equality, we shall only discuss the non-trivial case $G_S \cong \widehat{K_{a,b}}.$  Observe that from \eqref{eq:M2eq} and \eqref{eq:M4eq}, we have $\cM_2(\widehat{K_{a,b}})=2ab$ and $\cM_4(\widehat{K_{a,b}})= 2(ab)^2,$ respectively. On the other hand, by \cite[Theorem 2.4, Case 5]{akbari2023selfloop}, we have $\cE(\widehat{K_{a,b}})=2\sqrt{ab}.$ Thus we obtain $\cE(\widehat{K_{a,b}})^2= \cM^3_2(\widehat{K_{a,b}})/\cM_4(\widehat{K_{a,b}}).$ 
\end{proof}

When $\sigma=0,$ it follows from \eqref{eq:M2eq} that $\cM_2(G)=w^{cl}_2(G)$ and \eqref{eq:M4eq} that $\cM_4(G)= w^{cl}_4(G).$ Thus, Corollary~\ref{lb1} can be considered as an extension of the classical lower bound 
\[
\cE(G) \geq 2\sqrt{2}m\sqrt{\frac{m}{w^{cl}_4(G)}},
\] (cf. \cite[\S 4, eq (11)]{MajKloGut2009} and references therein) to self-loop graphs in terms of twisted moments.

The formulae of $\cM_3$ and $\cM_4$ from Proposition~\ref{M34formula} are exact but somewhat tedious. Here, we give a lower bound in terms of order $n$ and size $m$ only. Recall that:

\begin{corollary}\cite[Corollary 3.6]{akbari2024line} \label{Cor3.8}
  Let $G$ be a connected graph of order $n \geq 2$ and size $m,$ and let $S \subseteq V(G)$ and $|S|=\sigma, 0 \leq \sigma \leq n.$ Then,
  \[
    \cE(G_S) \geq \frac{4m}{n}.
  \]
\end{corollary}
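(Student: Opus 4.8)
The plan is to establish the stronger chain $\cE(G_S)\ge 2\bigl(\lambda_1(G_S)-\tfrac{\sigma}{n}\bigr)\ge \tfrac{4m}{n}$, which is the natural self-loop analogue of the classical estimate $\cE(G)\ge 2\lambda_1(G)\ge \tfrac{4m}{n}$. The first inequality will come from a trace-zero argument applied to the shifted spectrum, and the second from a Rayleigh-quotient lower bound on $\lambda_1(G_S)$.

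First I would pass to the shifted eigenvalues $\mu_i:=\lambda_i(G_S)-\tfrac{\sigma}{n}$, $i=1,\dots,n$. Since $\sum_{i=1}^n\lambda_i(G_S)=\tr A(G_S)=\sigma$, these satisfy $\sum_{i=1}^n\mu_i=0$, while $\cE(G_S)=\sum_{i=1}^n|\mu_i|$ by \eqref{eq:energy}. Because $G$ is connected of order $n\ge 2$ we have $m\ge 1$, so $\sum_{i=1}^n\mu_i^2=\cM_2(G_S)=2m+\sigma-\tfrac{\sigma^2}{n}$ is strictly positive (cf.\ \eqref{eq:M2eq} and Theorem~\ref{M2}); hence the $\mu_i$ are not all zero, and together with $\sum_i\mu_i=0$ this forces $\mu_1>0$. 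Splitting $\sum_i|\mu_i|$ into its positive and negative parts, each of which equals $\tfrac12\cE(G_S)$, gives $\cE(G_S)=2\sum_{\mu_i>0}\mu_i\ge 2\mu_1=2\bigl(\lambda_1(G_S)-\tfrac{\sigma}{n}\bigr)$.

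Next I would bound $\lambda_1(G_S)$ from below by testing the Rayleigh quotient of $A(G_S)$ against the all-ones vector $\mathbf 1\in\R^n$: $\lambda_1(G_S)\ge \dfrac{\mathbf 1^\top A(G_S)\mathbf 1}{\mathbf 1^\top\mathbf 1}=\dfrac1n\sum_{i,j}a_{ij}=\dfrac{2m+\sigma}{n}$, since each of the $m$ edges contributes $2$ and each of the $\sigma$ loops contributes $1$ to the entry sum of $A(G_S)$. Substituting this into the previous bound yields $\cE(G_S)\ge 2\bigl(\tfrac{2m+\sigma}{n}-\tfrac{\sigma}{n}\bigr)=\tfrac{4m}{n}$, which is the claim.

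I do not anticipate a genuine obstacle. The only point needing care is the degenerate case in the second paragraph where all $\mu_i$ would vanish, which would make $\cE(G_S)\ge 2\mu_1$ vacuous; this is excluded by $m\ge 1$. Conceptually, the shift by $\sigma/n$ is exactly what makes the bookkeeping balance: the extra $+\sigma$ appearing both in $\tr A(G_S)$ and in $\mathbf 1^\top A(G_S)\mathbf 1$ relative to the loopless case is absorbed by the shift, leaving the clean bound $4m/n$.
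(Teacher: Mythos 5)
Your argument is correct. Note that the paper itself does not prove this statement: it is quoted verbatim from \cite[Corollary 3.6]{akbari2024line}, so there is no in-paper proof to compare against. Your derivation is a clean, self-contained one and is the natural self-loop analogue of the classical chain $\cE(G)\ge 2\lambda_1(G)\ge 4m/n$: the shift $\mu_i=\lambda_i(G_S)-\sigma/n$ makes the spectrum trace-free, so the positive and negative parts of $\sum_i|\mu_i|$ balance and give $\cE(G_S)\ge 2\mu_1$; the Rayleigh quotient at the all-ones vector gives $\lambda_1(G_S)\ge (2m+\sigma)/n$, and the $\sigma/n$ shift exactly cancels the loop contribution. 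All the auxiliary points are handled properly: $\mu_1>0$ follows from $\cM_2(G_S)=2m+\sigma-\sigma^2/n\ge 2m>0$ (using $\sigma\le n$ and $m\ge n-1\ge 1$ for a connected graph of order $n\ge 2$), and the entry sum $\mathbf 1^\top A(G_S)\mathbf 1=2m+\sigma$ is consistent with the paper's convention $a_{ii}=1$ for $v_i\in S$. So the proposal is a valid proof of the cited bound, and in fact establishes the slightly stronger intermediate inequality $\cE(G_S)\ge 2\bigl(\lambda_1(G_S)-\sigma/n\bigr)$.
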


\begin{corollary}
\label{lb2}
Let $G_S$ be a connected self-loop graph of order $n$ and size $m.$ Then,
\begin{equation}
\label{eq:lb2}
\cM_3(G_S) \geq \frac{64m^3}{n^5}, \qquad
\cM_4(G_S) \geq \frac{256m^4}{n^7}.
\end{equation}
\end{corollary}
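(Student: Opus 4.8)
The plan is to bootstrap from the one genuinely analytic input we already have, the energy estimate $\cE(G_S) \geq 4m/n$ of Corollary~\ref{Cor3.8}, and push it through the ratio chain \eqref{eq:ratio} of Theorem~\ref{mainInq2}. The mechanism is simple: \eqref{eq:ratio} says that every ratio $\cM_i/\cM_{i-1}$ is at least the first one, $\cM_1/\cM_0 = \cE(G_S)/n$, so the sequence $(\cM_i)$ grows at least geometrically with ratio $\cE(G_S)/n$. Feeding in $\cE(G_S) \geq 4m/n$ at the end should produce exactly the powers of $m/n$ claimed in \eqref{eq:lb2}.

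Concretely, I would first extract from \eqref{eq:ratio} the three comparisons $\cM_1/\cM_0 \leq \cM_2/\cM_1$, $\cM_1/\cM_0 \leq \cM_3/\cM_2$, and $\cM_1/\cM_0 \leq \cM_4/\cM_3$. Since $\cM_0 = n$ and $\cM_0, \cM_1, \cM_2, \cM_3 > 0$ by Theorem~\ref{M2}, these rearrange to $\cM_2 \geq \cM_1^2/n$, $\cM_3 \geq \cM_1\cM_2/n$, and $\cM_4 \geq \cM_1\cM_3/n$. Substituting the first into the second gives $\cM_3 \geq \cM_1^3/n^2$, and then substituting that into the third gives $\cM_4 \geq \cM_1^4/n^3$; recall $\cM_1 = \cE(G_S)$ by \eqref{eq:M1eq}. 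Finally, inserting $\cE(G_S) \geq 4m/n$ from Corollary~\ref{Cor3.8} yields $\cM_3 \geq (4m/n)^3/n^2 = 64m^3/n^5$ and $\cM_4 \geq (4m/n)^4/n^3 = 256m^4/n^7$, which is \eqref{eq:lb2}.

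I do not expect a real obstacle: this is a short telescoping argument built entirely from results already proved. The only points needing a word of care are (i) the positivity of $\cM_0,\cM_1,\cM_2,\cM_3$, so that the ratios and the rearrangements are legitimate, which is exactly Theorem~\ref{M2}; and (ii) the observation that the intermediate bounds $\cM_3 \geq \cM_1^3/n^2$ and $\cM_4 \geq \cM_1^4/n^3$ can equally be obtained from \eqref{eq:IneqMk1}--\eqref{eq:IneqMk2}, so the conclusion is insensitive to exactly which packaging of the Cauchy--Schwarz estimate from Theorem~\ref{mainInq} one invokes. One could also record, as with Corollary~\ref{lb1}, when equality is approached (the $\widehat{K_{a,b}}$ family, via the sharpness of Corollary~\ref{Cor3.8}), but this is optional.
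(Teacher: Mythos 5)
Your proposal is correct and follows essentially the same route as the paper: both arguments use the ratio chain of Theorem~\ref{mainInq2} (with positivity from Theorem~\ref{M2}) to telescope down to $\cM_3(G_S) \geq \cE(G_S)^3/n^2$ and $\cM_4(G_S) \geq \cE(G_S)^4/n^3$, and then insert $\cE(G_S) \geq 4m/n$ from Corollary~\ref{Cor3.8}. The only difference is the cosmetic choice of which ratios you multiply (the paper passes through $\cM_3 \geq \cM_2^2/\cM_1$ and $\cM_4 \geq \cM_2^2/n$, you through $\cM_3 \geq \cM_1\cM_2/n$ and $\cM_4 \geq \cM_1\cM_3/n$), which yields the same intermediate bounds.
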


\begin{proof}
By Theorem~\ref{mainInq2}, we obtain 
\begin{align*}
\cM_3(G_S) \geq \frac{\cM_2^2(G_S)}{\cE(G_S)} \geq \frac{\cE(G_S)^3}{n^2}, \qquad 
\cM_4(G_S) \geq \frac{\cM_2^2(G_S)}{n} \geq \frac{\cE(G_S)^4}{n^3}.
\end{align*}
The claim follows immediately by Corollary~\ref{Cor3.8}. 
\end{proof}

Next, we prove a generalization of \cite[Theorem 1a]{Zhou2007} to self-loop graphs. 

\begin{theorem}
Let $G_S$ be a self-loop graph of order $n \geq 2$ and size $m\geq 1.$ Let $r,s,t$ be nonnegative real numbers such that $4r=s+t+2.$ Then,
\[
\cE(G_S) \geq \frac{\cM^2_r(G_S)}{\sqrt{\cM_s(G_S) \cM_t(G_S)}}.
\]
\end{theorem}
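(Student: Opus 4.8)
The plan is to derive the stated bound by combining the Cauchy--Schwarz-type inequality of Theorem~\ref{mainInq} with the ratio chain of Theorem~\ref{mainInq2}, exactly in the spirit of the proof of Corollary~\ref{lb1}. First I would record the two consequences of Theorem~\ref{mainInq} that match the hypothesis $4r = s+t+2$. Applying Theorem~\ref{mainInq} with $q=r$ and a suitable choice of $p$ gives $\cM_r(G_S)^2 \le \cM_{2r-2p}(G_S)\,\cM_{2p}(G_S)$; choosing $2p = s+1$ (so that $2r-2p = 2r-s-1 = t+1$, using $4r=s+t+2$) yields the first relation
\[
\cM_r(G_S)^2 \le \cM_{t+1}(G_S)\,\cM_{s+1}(G_S).
\]
By symmetry (or choosing $2p=t+1$ instead) the roles of $s$ and $t$ can be swapped, but the above single inequality is what I need. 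The point of the exponents $s+1$ and $t+1$ is that they are each exactly one more than $s$ and $t$, which is what lets the ratio chain bring in $\cE(G_S)=\cM_1(G_S)$.

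Next I would invoke Theorem~\ref{mainInq2}: the chain $\frac{\cM_1}{\cM_0}\le \frac{\cM_2}{\cM_1}\le \frac{\cM_3}{\cM_2}\le\cdots$ shows that the sequence $\cM_{j+1}/\cM_j$ is nondecreasing in $j$, hence in particular $\frac{\cM_2}{\cM_1}\le \frac{\cM_{s+1}}{\cM_s}$ and $\frac{\cM_2}{\cM_1}\le \frac{\cM_{t+1}}{\cM_t}$ whenever $s,t\ge 1$. Rearranging, these give
\[
\cM_{s+1}(G_S) \ge \frac{\cM_2(G_S)}{\cE(G_S)}\,\cM_s(G_S)
\quad\text{is the wrong direction;}
\]
what I actually want is $\cE(G_S)=\cM_1 \ge \frac{\cM_2\,\cM_s}{\cM_{s+1}}$ and similarly $\cE(G_S)\ge \frac{\cM_2\,\cM_t}{\cM_{t+1}}$. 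Since $\cM_2>0$ by Theorem~\ref{M2}, multiplying these two lower bounds for $\cE(G_S)$ and then using the first-paragraph inequality $\cM_{s+1}\cM_{t+1}\ge \cM_r^2$ to replace the denominator gives
\[
\cE(G_S)^2 \ge \frac{\cM_2^2\,\cM_s\,\cM_t}{\cM_{s+1}\,\cM_{t+1}} \ge \frac{\cM_2^2\,\cM_s\,\cM_t}{\cM_r^2}\cdot\frac{1}{1}
\]
--- wait, this has $\cM_r$ in the denominator, the opposite of what is wanted. So the correct route is to instead apply Theorem~\ref{mainInq2} in the form that bounds $\cM_r$ from below: from the nondecreasing-ratio property, $\cM_r^2 \le \cM_{2r}\cM_0 = n\,\cM_{2r}$ is again not quite it either. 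The clean argument is: use Theorem~\ref{mainInq} once more with $q=r$, $p=1$ to get $\cM_r^2 \le \cM_{2r-2}(G_S)\,\cM_2(G_S)$, and separately bound $\cE(G_S)\,\sqrt{\cM_s\cM_t}$ from below via the ratio chain applied to the indices $r,s,t$ directly. I would set this up so that the final inequality reads $\cE(G_S)\sqrt{\cM_s\cM_t}\ge \cM_r^2$, i.e. rewrite the desired conclusion as $\cM_1(G_S)\cM_s(G_S)^{1/2}\cM_t(G_S)^{1/2}\ge \cM_r(G_S)^2$ and prove that.

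Concretely, the cleanest proof: by Theorem~\ref{mainInq2} the ratios are nondecreasing, so for the index $r$ lying (by $4r=s+t+2$, so $r=\frac{s+t+2}{4}$) between $1$ and $\max(s,t)$-ish, log-convexity of the sequence $(\cM_j)$ in $j$ — which is exactly what the nondecreasing-ratio statement encodes — gives $\cM_r^2 \le \cM_{r-a}\,\cM_{r+a}$ for any admissible $a$, and more generally $\cM_r \le \cM_u^{\theta}\cM_v^{1-\theta}$ for $r=\theta u+(1-\theta)v$. I would apply this log-convexity with the three-term weighted version: write $2r = \frac{1}{2}\cdot 2 + \frac{1}{2}s + \frac{1}{2}t$ after noting $4r = 2+s+t$, so that $\cM_r^2 \le \cM_1^{?}\cdots$. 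The bookkeeping to make the exponents land on $\cM_1^{1}$, $\cM_s^{1/2}$, $\cM_t^{1/2}$ is the one genuinely delicate point, and I would carry it out by repeatedly applying the pairwise inequality $\cM_{\frac{a+b}{2}}^2\le \cM_a\cM_b$ (which is Theorem~\ref{mainInq} with $q=\frac{a+b}{2}$, $p=\frac{a}{2}$... actually $2q-2p=b$, $2p=a$ forces $q=\frac{a+b}{2}$, $p=\frac{a}{2}$, valid since $p\le q$) and chaining: first $\cM_r^2\le \cM_{r-1}\cM_{r+1}$... no. The correct single application is Theorem~\ref{mainInq} with $q=r$ and $2p=s$, giving $\cM_r^2\le \cM_{2r-s}\cM_s=\cM_{\frac{t+2}{2}}\cM_s$ (using $2r-s=\frac{t+2}{2}$); then apply Theorem~\ref{mainInq} again to $\cM_{\frac{t+2}{2}}$ with $q=\frac{t+2}{2}$, $2p=t$: $\cM_{\frac{t+2}{2}}^2\le \cM_{2}\cM_{t}$ (since $2\cdot\frac{t+2}{2}-t=2$). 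Hence $\cM_r^4 \le \cM_{\frac{t+2}{2}}^2\,\cM_s^2 \le \cM_2\,\cM_t\,\cM_s^2$, and I still need to trade $\cM_2$ for $\cM_1^2$: but $\cM_2 \le \cM_1\cdot(\cM_2/\cM_1)\le \cM_1\cdot(\cM_3/\cM_2)$... the direction is wrong since $\cM_2/\cM_1$ could exceed $1$.

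The main obstacle, therefore, is exactly this: getting the single power $\cE(G_S)=\cM_1$ out front with the right sign, since the ratio chain only tells us ratios increase, not that $\cM_1$ dominates. The resolution I will commit to in the writeup: apply Theorem~\ref{mainInq} directly with $q=r$ and $p$ chosen so that $2p = s+1$, $2q-2p = t+1$ (legitimate as $p\le q$ iff $s+1\le t+2r-s-1$, i.e. $s\le t+ (s+t+2)/2 - s -1 = (t-s)/2 +  ...$; need to verify $p\le q$, which holds when $s\le t+1$ — and by symmetry assume WLOG $s\le t$), obtaining $\cM_r^2\le \cM_{t+1}\cM_{s+1}$; then Theorem~\ref{mainInq2} gives $\cM_{s+1}\le \cM_s\cdot \frac{\cM_{s+1}}{\cM_s}$ trivially but more usefully $\frac{\cM_{s+1}}{\cM_s}\le \frac{\cM_?}{\cM_?}$... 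I will instead use $\cM_{s+1}\cM_{t+1} \le \cM_s\cM_{t+2}\cdot\frac{\cM_{s+1}\cM_{t+1}}{\cM_s\cM_{t+2}}$ and note $\frac{\cM_{t+1}}{\cM_{t+2}}\le\frac{\cM_s}{\cM_{s+1}}$ fails... The honest plan: prove the equivalent statement $\cM_r(G_S)^2 \le \cM_1(G_S)\,\sqrt{\cM_s(G_S)\,\cM_t(G_S)}$ by writing it as $\cM_r^4 \le \cM_1^2\,\cM_s\,\cM_t$, and establish $\cM_1^2\cM_s\cM_t \ge \cM_r^4$ from the log-convexity chain $\cM_1^2 \cdot \cM_s \cdot \cM_t \ge$ (four-fold geometric-mean-type bound). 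Using that $(\cM_j)$ is log-convex with $4r = 1+1+s+t$... no, $4r=2+s+t$, so $r$ is the average of $\{1,1,s,t\}$ only if $4r = 2+s+t$ — \emph{which is exactly the hypothesis}. Therefore $r = \frac{1+1+s+t}{4}$ is the arithmetic mean of the four indices $1,1,s,t$, and log-convexity of $(\cM_j)_{j\ge 0}$ (equivalent to Theorem~\ref{mainInq2}, via the nondecreasing-ratio reformulation) yields by Jensen/AM applied to the convex function $j\mapsto \log\cM_j$ that $\cM_r \le (\cM_1\cdot\cM_1\cdot\cM_s\cdot\cM_t)^{1/4}$, i.e. $\cM_r^4 \le \cM_1^2\cM_s\cM_t$, hence $\cE(G_S)=\cM_1 \ge \cM_r^2/\sqrt{\cM_s\cM_t}$. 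So the real content is: (i) observe $4r=2+s+t$ means $r$ is the average of $1,1,s,t$; (ii) upgrade Theorem~\ref{mainInq2} to log-convexity of $(\cM_j)_j$ on $[0,\infty)$; (iii) apply the discrete Jensen inequality for the four points. Step (ii) needs a small argument that the pairwise bound $\cM_{(a+b)/2}^2\le\cM_a\cM_b$ (Theorem~\ref{mainInq}) plus continuity gives genuine convexity of $j\mapsto\log\cM_j$, and that all quantities are strictly positive by Theorem~\ref{M2}; this, together with justifying the four-point Jensen step from the two-point (midpoint) convexity, is the step I expect to require the most care.
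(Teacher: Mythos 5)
Your final committed plan is correct and in substance identical to the paper's proof: the hypothesis $4r=s+t+2$ makes $r$ the average of the four indices $1,1,s,t$, and the whole content is the inequality $\cM_r^4\le \cM_1^2\,\cM_s\,\cM_t$, which is exactly what the paper derives from Theorem~\ref{mainInq}. The only real difference is that the step you flag as requiring the most care (upgrading midpoint convexity to genuine log-convexity via continuity and then a four-point Jensen argument) is unnecessary: since $r$ is the midpoint of $1$ and $\tfrac{s+t}{2}$, two direct applications of Theorem~\ref{mainInq} already give the result — first with $q=r$, $p=\tfrac12$, yielding $\cM_r^2\le \cM_1\cM_{2r-1}$ with $2r-1=\tfrac{s+t}{2}$, and then with $q=\tfrac{s+t}{2}$, $2p=\min(s,t)$, yielding $\cM_{(s+t)/2}^2\le \cM_s\cM_t$ — which is precisely the paper's two-line argument (positivity from Theorem~\ref{M2} being needed only to divide at the end).
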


\begin{proof}
Let $q=r, p=\frac{1}{2},$ and $k=2r-1 = 2q-2p.$ By Theorem~\ref{mainInq}, we obtain 
\begin{equation} \label{eq:3.9a}
\cM_r^2(G_S) \leq \cM_1(G_S) \cM_{k}(G_S).
\end{equation}
By assumption, we deduce $k=\frac{1}{2}(s+t).$ Apply Theorem 3.4 again, we have 
\begin{equation}\label{eq:3.9b}
\cM^2_{k}(G_S) \leq \cM_s(G_S)\cM_t(G_S).
\end{equation} 
Squaring \eqref{eq:3.9a} and combine with \eqref{eq:3.9b},
\[
\cM_r^{4} \leq \cM_1^2(G_S) \cM_k^2(G_S) \leq \cM_1^2(G_S)\cM_s(G_S)\cM_t(G_S). 
\] Since $\cM_1(G_S)=\cE(G_S),$ we get 
\[
\cE(G_S) \geq \frac{\cM^2_r(G_S)}{\sqrt{\cM_s(G_S) \cM_t(G_S)}}.\qedhere
\]
\end{proof}

\vspace{0.5cm}
\textbf{Acknowledgement.}
The author thanks Irena M. Jovanov\'ic for encouragement and discussion on the topic, especially pointing out the relevance of Theorem 2.1, and corrections in Example 2.17 and Proposition 3.3. The author is also grateful to the reviewers for their constructive suggestions and helpful advice that greatly improves the presentation of this paper.

\vspace{0.5cm}
\textbf{Conflicts of interest.} The author declares no conflict of interest.

\bibliography{bibliography}{}

\providecommand{\bysame}{\leavevmode\hbox to3em{\hrulefill}\thinspace}
\providecommand{\MR}{\relax\ifhmode\unskip\space\fi MR }
\providecommand{\MRhref}[2]{%
  \href{http://www.ams.org/mathscinet-getitem?mr=#1}{#2}
}
\providecommand{\href}[2]{#2}
\begin{thebibliography}{10}

\bibitem{Aigner2018}
M.~Aigner and G.~M. Ziegler, \emph{In praise of inequalities}, pp.~143--150,
  Springer Berlin Heidelberg, 2018.

\bibitem{akbari2023selfloop}
S.~Akbari, H.~Al~Menderj, M.~H. Ang, J.~Lim, and Z.~C. Ng, \emph{Some results
  on spectrum and energy of graphs with loops}, Bull. Malays. Math. Sci. Soc.
  \textbf{46} (2023), no.~3, Paper No. 94, 18. \MR{4567384}

\bibitem{akbari2024line}
S.~Akbari, I.~M. Jovanovi\'c, and J.~Lim, \emph{Line graphs and
  {N}ordhaus-{G}addum-type bounds for self-loop graphs}, Bull. Malays. Math.
  Sci. Soc. \textbf{47} (2024), no.~4, Paper No. 117, 22. \MR{4751718}

\bibitem{biggs1993algebraic}
N.~Biggs, \emph{Algebraic {G}raph {T}heory}, no.~67, Cambridge University
  Press, 1993.

\bibitem{cvetkovic1995spectra}
D.~M. Cvetkovi\'{c}, M.~Doob, and H.~Sachs, \emph{Spectra of graphs}, third
  ed., Johann Ambrosius Barth, Heidelberg, 1995, Theory and applications.
  \MR{1324340}

\bibitem{GutmanDas2004}
I.~Gutman and K.~C. Das, \emph{The first {Z}agreb index 30 years after}, MATCH
  Commun. Math. Comput. Chem. (2004), no.~50, 83--92. \MR{2037426}

\bibitem{gutman2021energy}
I.~Gutman, I.~Red{\v{z}}epovi{\'c}, B.~Furtula, and A.~Sahal, \emph{Energy of
  {G}raphs with {S}elf-{L}oops}, MATCH Commun. Math. Comput. Chem. \textbf{87}
  (2021), 645--652.

\bibitem{Harary1969}
F.~Harary, \emph{Graph theory}, Addison-Wesley Publishing Co., Reading,
  Mass.-Menlo Park, Calif.-London, 1969. \MR{256911}

\bibitem{HornJohnson2013}
R.~A. Horn and C.~R. Johnson, \emph{Matrix analysis}, second ed., Cambridge
  University Press, Cambridge, 2013. \MR{2978290}

\bibitem{jovanovic2023}
I.~Jovanovi{\'c}, E.~Zogi{\'c}, and E.~Glogi{\'c}, \emph{On the conjecture
  related to the energy of graphs with self-loops}, MATCH Commun. Math. Comput.
  Chem. \textbf{89} (2023), 479--488.

\bibitem{SomborQuanti2023}
J.~Lim, Z.~K. Chew, M.~Lim, and K.~J. Thoo, \emph{Quantization of {S}ombor
  {E}nergy for {C}omplete {G}raphs with {S}elf-{L}oops of {L}arge {S}ize},
  Iranian Journal of Mathematical Chemistry \textbf{14} (2023), no.~4,
  225--241.

\bibitem{MajKloGut2009}
S.~Majstorovi\'{c}, A.~Klobu\v{c}ar, and I.~Gutman, \emph{Selected topics from
  the theory of graph energy: hypoenergetic graphs}, Zb. Rad. (Beogr.)
  \textbf{13(21)} (2009), 65--105. \MR{2543254}

\bibitem{rakshith2024}
B.~R. Rakshith, K.~C. Das, B.~J. Manjunatha, and Y.~Shang, \emph{Relations
  between ordinary energy and energy of a self-loop graph}, Heliyon \textbf{10}
  (2024), no.~6, e27756.

\bibitem{ShettyBhat2023}
S.~S. Shetty and A.~K. Bhat, \emph{On the first zagreb index of graphs with
  self-loops}, AKCE International Journal of Graphs and Combinatorics
  \textbf{20} (2023), no.~3, 326--331.

\bibitem{Zhou2007}
B.~Zhou, I.~Gutman, J.~A. de~la Pe\~{n}a, J.~Rada, and L.~Mendoza, \emph{On
  spectral moments and energy of graphs}, MATCH Commun. Math. Comput. Chem.
  \textbf{57} (2007), no.~1, 183--191. \MR{2293903}

\end{thebibliography}
\bibliographystyle{amsplain}


\end{document}